\newcolumntype{2}{D{.}{}{2.0}}
  \def\<{{\langle}} 
  \def\>{{\rangle}} 
  \def\la{{\triangleright}}
  \def\note#1{{}}
  \def\note#1{} 
  \def\lhom#1#2#3{{}{{\rm Hom}\sb{#1}(#2,#3)}} 
  \def\hom#1#2#3{{{\rm Hom}\sb{#1}(#2,#3)}}
  \def\abs#1{\mathrm{Abs}(#1)}
  \def\beq{\begin{equation}} 
  \def\eeq{\end{equation}}
  \def\mh#1{[#1]}
  \newcounter{zlist} 
  \newenvironment{zlist}{\begin{list}{(\arabic{zlist})}{ 
  \usecounter{zlist}\leftmargin2.5em\labelwidth2em\labelsep0.5em 
  \topsep0.6ex
  \parsep0.3ex plus0.2ex minus0.1ex}}{\end{list}}
  \newcounter{blist} 
  \newenvironment{blist}{\begin{list}{(\alph{blist})}{ 
  \usecounter{blist}\leftmargin2.5em\labelwidth2em\labelsep0.5em 
  \topsep0.6ex 
  \parsep0.3ex plus0.2ex minus0.1ex}}{\end{list}} 
  \newcounter{rlist} 
  \newenvironment{rlist}{\begin{list}{(\roman{rlist})}{ 
  \usecounter{rlist}\leftmargin2.5em\labelwidth2em\labelsep0.5em 
  \topsep0.6ex 
  \parsep0.3ex plus0.2ex minus0.1ex}}{\end{list}}
\def\stac#1{\raise-.2cm\hbox{$\stackrel{\displaystyle\otimes}{\scriptscriptstyle{#1}}$}}
\def\cten#1{\raise-.2cm\hbox{$\stackrel{\displaystyle\widehat{\otimes}}
{\scriptscriptstyle{#1}}$}}
  \def\Label#1{\label{#1}\ifmmode\llap{[#1] }\else 
  \marginpar{\smash{\hbox{\tiny [#1]}}}\fi} 
  \def\Label{\label}
  \newtheorem{proposition}{Proposition}[section]
  \newtheorem{lemma}[proposition]{Lemma} 
  \newtheorem{corollary}[proposition]{Corollary} 
  \newtheorem{theorem}[proposition]{Theorem} 
\theoremstyle{definition} 
  \newtheorem{definition}[proposition]{Definition}
  \newtheorem{example}[proposition]{Example}
  \theoremstyle{remark} 
  \newtheorem{remark}[proposition]{Remark}
  \newcounter{c} 
  \newcommand{\etyk}[1]{\vspace{-7.4mm}$$\begin{equation}\Label{#1} 
  \addtocounter{c}{1}} 
  \renewcommand{\]}{\ifnum \value{c}=1 $$\else \end{equation}\fi} 
   \numberwithin{equation}{section}
\newcommand{\Mod}{{\mathbf{mod}}}
\def\NN{{\mathbb N}}
\def\ZZ{{\mathbb Z}}
\newcommand{\gG}{\mathrm{G}}
\newcommand{\hH}{\mathrm{H}}
\newcommand{\tT}{\mathrm{T}}
\newcommand{\Aa}{\mathcal{A}}
\newcommand{\Cc}{\mathcal{C}}
\newcommand{\Gg}{\mathcal{G}}
\newcommand{\Hh}{\mathcal{H}}
\newcommand{\Tt}{\mathcal{T}}
\def\*C{{}^*\hspace*{-1pt}{\Cc}}
\def\text#1{{\rm {\rm #1}}}
 \def\1{\mathbf{1}}
  \def\la{\triangleright}
\def\ahrd{\mathbf{Ah}}
\def\lto{\longmapsto}
\def\lra{\longrightarrow}
\def\lhom#1#2#3{\mathrm{Hom}_#1(#2,#3)}
\def\rev#1{#1^\circ}
    \def\wphi{\widehat{\varphi}}
\def\sym#1{\colon\!\!#1\!\colon\!\!}
\def\osym#1{{\overline{\sym{#1}}}}
\def\1\mathbf{1}
\def\ds{\hbox{-}}
\def\boks#1#2#3{\overset{#2}{\underset{#1}{\boxplus}}#3}
\def\Abs{\mathrm{Abs}}
\begin{document}

\title[Modules over trusses: direct sums and free modules]{Modules over trusses vs modules over rings: direct sums and free modules}

\author{Tomasz Brzezi\'nski}

\address{
Department of Mathematics, Swansea University, 
Swansea University Bay Campus,
Fabian Way,
Swansea,
  Swansea SA1 8EN, U.K.\ \newline \indent
Department of Mathematics, University of Bia{\l}ystok, K.\ Cio{\l}kowskiego  1M,
15-245 Bia\-{\l}ys\-tok, Poland}

\email{T.Brzezinski@swansea.ac.uk}

\author{Bernard  Rybo{\l}owicz}

\address{
Department of Mathematics, Swansea University, 
Swansea University Bay Campus,
Fabian Way,
Swansea,
  Swansea SA1 8EN, U.K.}

\email{bernard.rybolowicz@swansea.ac.uk}

\subjclass[2010]{16Y99; 08A99}

\keywords{Truss; free heap; free module; direct sum}

\begin{abstract}
Categorical constructions on heaps and modules over trusses are considered and contrasted with the corresponding constructions on groups and rings. These include explicit description of free heaps and free Abelian heaps, coproducts or direct sums of Abelian heaps and modules over trusses, and description and analysis of free modules over trusses. It is shown that the direct sum of two non-empty Abelian heaps is always infinite and isomorphic to the heap associated to the direct sums of the group retracts of both heaps and $\mathbb{Z}$. Direct sum is used to extend a given truss to a ring-type truss or a unital truss (or both). Free modules are constructed as direct sums of a truss. It is shown that only free rank-one modules are free as modules  over the associated truss. On the other hand, if a (finitely generated) module over a truss associated to a ring is free, then so is the corresponding quotient-by-absorbers module over this ring.
\end{abstract}
\date\today
\maketitle



\section{Introduction}
Trusses and skew trusses were defined in \cite{Brz:tru} in order to capture the nature of the distinctive distributive law that characterises braces and skew braces \cite{Rum:bra}, \cite{CedJes:bra}, \cite{GuaVen:ske}. A (one-sided) truss is a set with a ternary operation which makes it into a heap or herd (see \cite{HolLaw:Wag}, \cite{Pru:the}, \cite{Bae:ein} or \cite{Sus:the}) together with an associative binary operation that distributes (on one side or both) over the heap ternary operation. If the specific binary operation admits it, a choice of a particular element could fix a group structure on a heap in a way that turns the truss into a ring or a brace-like system (which becomes a brace provided the binary operation admits  inverses). In \cite{Brz:par} the study of trusses from the ring-theoretic point of view has been initiated, in particular, the notion of a module over a truss was introduced. The present paper, intended as the first in a series which will focus on categorical properties of modules over trusses and contrast them with analogous properties of modules over rings,  is a natural continuation of these studies.

As heaps, trusses, and modules over trusses form varieties of algebras, the structure of their categories is rich and -- in many respects -- known. For example, they admit free objects (and the free-forgetful adjunction is monadic), products, coproducts, etc., see e.g.\ \cite{Ber:inv}. Thus our first aim is not to re-discover these properties but to describe them explicitly exploring close connection between heaps and groups so that they can be expressed in terms of more familiar systems with binary rather than ternary operations.

The paper is organised as follows. We start with a preliminary Section~\ref{sec.pre} in which we list basic properties of heaps, trusses and their modules.
The main goal of Section~\ref{sec.free.heap}  is to construct coproducts of Abelian heaps and modules over a truss and relate them  to coproducts of Abelian groups and modules over a ring.  We begin by  describing an explicit construction of  free heaps and  free Abelian heaps which involves grafting and pruning reduced (parity-symmetric in the Abelian case) words of odd-length, and we briefly introduce the product of heaps. It is then shown that coproducts of Abelian heaps (and then modules over a truss) are specific, explicitly described, quotients of free Abelian heaps. We list elements of coproducts, and this knowledge of the contents of the coproduct of two Abelian heaps allows us to relate it  to the direct sum of groups obtained as retracts of these heaps. Specifically we show that the direct sum (coproduct) of two Abelian heaps is isomorphic to the heap associated to the direct sum of corresponding two Abelian groups and $\ZZ$. An immediate consequence of this identification is that the direct sum of two non-empty heaps is an infinite heap, even if the heaps are finite. Equipped with coproducts we discuss and present extensions of general trusses to unital or ring-type trusses. 
 
Section~\ref{sec.free.mod} is focused on the construction of free unital modules over a unital truss as direct sums of the truss and on study of their relationship to modules over a ring. Given a unital ring $R$ we define two functors: the functor $\tT$ from the category of modules over $R$ to the category of modules over the truss $\tT(R)$ associated to $R$, and the functor $(-)_{\mathrm{Abs}}$ in the opposite direction. In contradistinction to the former which is simply based on the change of point of view (every ring can be viewed as a truss, and every module over a ring can be viewed as a module over this truss), the latter to a $\tT(R)$-module $M$ associates the retract of the quotient of $M$ by the submodule of its absorbers. Theorem~\ref{thm.1dim}, which is the main result of this section, establishes that given a ring $R$, the $\tT(R)$-module $\tT(N)$  associated to an $R$-module $N$ is free as a $\tT(R)$-module if and only if $N$ is isomorphic to $R$ as an $R$-module. In the converse direction, if $M$ is  a (finitely generated) free module over $\tT(R)$, then $M_{\mathrm{Abs}}$ is a (finitely generated) free module over $R$.


\section{Preliminaries}\label{sec.pre}
\subsection{Heaps} \label{sec.heap}
Following \cite{HolLaw:Wag},\cite{Pru:the}, \cite{Bae:ein} or \cite{Sus:the}  a {\em heap} or a {\em herd} is a set $H$ together with a ternary operation $
[---]: H\times H\times H\lra H$ such that, for all $a,b,c,d,e\in H$,
\begin{equation}\label{heap.def}
[[a,b,c],d,e] = [a,b,[c,d,e]], \qquad [a,b,b] = a = [b,b,a].
\end{equation}
The first of equations \eqref{heap.def} is often referred to as the {\em associativity} the remaining two are known as {\em Mal'cev identities}. Any of the latter implies that a ternary heap  operation is an idempotent operation. A morphism of heaps  is a function that preserves ternary operations.  
A singleton set with the (unique) ternary operation is the terminal object in the category of heaps, which we denote by $\star$.  As the definition of a heap uses only universal quantifiers, the empty set with the unique ternary operation given by $\emptyset\times \emptyset\times\emptyset\lra \emptyset$ is a heap, which is the initial object in the category of heaps.

A heap $H$ is said to be {\em Abelian} if for all $a,b,c\in H$,
\begin{equation}\label{ab.heap}
[a,b,c] = [c,b,a].
\end{equation}
The full subcategory of the category of heaps consisting of Abelian heaps is denoted by $\ahrd$. Homomorphism sets of Abelian heaps are themselves Abelian heaps with the point-wise operation, i.e., for all $\varphi, \varphi',\varphi'' \in \ahrd(H,K)$, the function
\begin{equation}\label{mor.heap}
[\varphi, \varphi',\varphi'']: H\lra K, \qquad a\lto [\varphi(a), \varphi'(a),\varphi''(a)],
\end{equation} 
is a homomorphism of (Abelian) heaps and the assignment of $[\varphi, \varphi',\varphi'']$ to $\varphi, \varphi'$ and $\varphi''$ satisfies \eqref{heap.def}.

There is a close relationship between heaps and groups. Given a group $G$, there is an associated heap $\mathrm{H}(G)$ with operation, for all $g,h,k\in G$,
\begin{equation}\label{heap-form-gr}
[g,h,k] = gh^{-1}k.
\end{equation}
This heap is Abelian if the group is Abelian. A group homomorphism is a heap morphism, hence the assignment $G\lto \hH(G)$ is a functor from the category of (Abelian) groups to that of (Abelian) heaps. Conversely, given a heap $H$ and any element $e\in H$, the binary operation on $H$, defined for all $a,b\in H$,
\begin{equation}\label{gr-from-heap}
a\cdot_e b = [a,e,b],
\end{equation}
makes $H$ into a group with the neutral element $e$ and the inverse of $a\in H$, $a^{-1}= [e,a,e]$. This group is known as a {\em retract} of $(H,[---])$ (see e.g. \cite{Dud:ter}) and we denote it by $\mathrm{G}(H;e).$ One easily checks that the endomaps, defined for all $e,f\in H$,
\begin{equation}\label{swap}
\tau_e^f : H\lra H, \quad a\lto [a,e,f], \qquad \tau_f^e : H\lra H, \quad a\lto [a,f,e]
\end{equation}
are mutually inverse heap isomorphisms. Furthermore, they are isomorphisms of groups $\mathrm{G}(H;e)$ and $\mathrm{G}(H;f)$. Thus all groups associated to a given heap can be identified up to isomorphism. The process of converting heaps into groups and groups to heaps is asymmetric, which is best expressed by the following formulae
\begin{equation}\label{h-g-h-g}
\mathrm{H}\left(\mathrm{G}\left(H;e\right)\right) = H, \qquad \mathrm{G}\left(\mathrm{H}\left(G);e\right)\right) \cong \mathrm{G}\left(\mathrm{H}\left(G);f\right)\right)
\end{equation}
for all heaps $H$ and groups $G$. One can also easily observe that two heaps $H, H'$ are isomorphic if and only if $\mathrm{G}(H,e_{H})$ and $\mathrm{G}(H',e_{H'})$, for any (and hence all) choices of $e_H$, $e_{H'}$ are isomorphic. Indeed, since a group homomorphism is a morphism of corresponding heaps, if groups are isomorphic then corresponding heaps are isomorphic too.  Conversely,  if there is a heap isomorphism $\varphi:H\lra H'$, then,  for any $e_{H}\in H$,  $\varphi$ is a group isomorphism from $\mathrm{G}(H,e_{H})$ to $\mathrm{G}(H',\varphi(e_{H}))$, and since   all groups generated from the same heap are isomorphic we obtain that $\mathrm{G}(H,e_{H})$ is isomorphic with $\mathrm{G}(H',e_{H'})$, for all $e_{H}\in H$ and $e_{H'}\in H',$ as claimed.

Equations \eqref{heap.def} imply, for all $a,b,c,d,e\in H$,
\begin{equation}\label{def.heap.con}
[[a,b,c],d,e]=[a,[d,c,b],e] = [a,b,[c,d,e]].
\end{equation}
Consequently, in the case of an Abelian heap, the reduction obtained by any placement of brackets in a sequence of elements of $H$ of an odd length yields the same result. In this case we write
\begin{equation}\label{multi}
\mh{a_1,\ldots, a_{2n+1}}_n \quad \mbox{or simply}\quad \mh{a_1,\ldots, a_{2n+1}}, \qquad a_1,\ldots , a_{2n+1}\in H,
\end{equation}
for the result of applying the (Abelian) heap operation $n$-times in any possible way. Furthermore, in an Abelian heap one has at one's disposal the following {\em transposition rule},
\begin{equation}\label{tran}
[[a_1,a_2,a_3],[b_1,b_2,b_3],[c_1,c_2,c_3]] = [[a_1,b_1,c_1],[a_2,b_2,c_2],[a_3,b_3,c_3]].
\end{equation}

Also directly from equations \eqref{heap.def} one can observe that adding or removing an element in two consecutive places, whether separated by a bracket or not, does not change the value of the (multiple) heap operation. Another important consequence of the definition of a heap is that if for any $a,b\in H$ there exists $c\in H$ such that 
\begin{equation}\label{equal}
[a,b,c] = c \quad \mbox{or} \quad [c,a,b]=c,
\end{equation}
then $a=b$. In fact, in view of the Mal'cev identities,  \eqref{equal} is an equivalent characterisation of equality of elements in a heap.

A  subset $S$ of a heap $H$ that is closed under the heap operation is called a {\em sub-heap} of $H$. A sub-heap $S$ is said to be {\em normal}  if there exists $e\in S$ such that for all $a\in H$ and $s\in S$ there exists $t\in S$ such that
\begin{equation}\label{normal}
[a,e,s] = [t,e,a].
\end{equation}
Often in a statement about a heap existential quantifiers can be replaced by the  universal ones. The definition of a normal sub-heap can be stated equivalently, by requesting that for all $a\in H$ and $e,s\in S$ there exists $t\in S$ such that the equality \eqref{normal} holds. Every sub-heap of an Abelian heap is normal.

Every sub-heap $S$ of $H$ defines an equivalence relation $\sim_S$ on $H$:
\begin{equation}\label{rel.sub}
a\sim_S b \quad \mbox{if and only if} \quad \exists s\in S, [a,b,s]\in S \quad \mbox{if and only if} \quad \forall s\in S, [a,b,s]\in S.
\end{equation}
One easily checks that the equivalence class of $a$ with respect to the sub-heap relation $\sim_S$ is
\begin{equation}\label{rel.class}
\bar{a} = \{[s,t,a]\;|\; s,t\in S\}.
\end{equation}
If $S$ is a normal sub-heap of $H$, then the equivalence classes of $\sim_S$ form a heap with operation induced from that in $H$, i.e.\
\begin{equation}\label{quotient}
[\bar a, \bar b, \bar c] = \overline{[a,b,c]},
\end{equation}
where $\bar a$ denotes the class of $a\in H$, etc.  This is known as a {\em quotient heap} and is denoted by $H/S$. Note that  conditions $[a,b,s]\in S$ in \eqref{rel.sub} can be equivalently replaced by $[s,a,b]\in S$ if $S$ is a normal sub-heap. For any $s\in S$ the class of $s$ is equal to $S$.

A subset $S$ is a sub-heap of $H$ if and only if, for all $e\in S$, the retract of $S$, $\gG(S;e)$, is a subgroup of the retract $\gG(H;e)$. $\gG(S;e)$ is a normal subgroup of $\gG(H;e)$ if and only if $S$ is a normal sub-heap of $H$.

\begin{lemma}\label{lem.frac2}
Let $H$ be a heap and $S\subset H$ a normal sub-heap, then, for all $e\in S$,
$$
H/S= \mathrm{H}(\mathrm{G}(H;e)/\mathrm{G}(S;e)).
$$
\end{lemma}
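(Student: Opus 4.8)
The plan is to prove the equality of heaps by checking that the two sides have literally the same underlying set and the same ternary operation. Throughout I would use two facts recorded in the excerpt: that $\mathrm{G}(S;e)$ is a normal subgroup of $\mathrm{G}(H;e)$ (which holds precisely because $S$ is a normal sub-heap), and that $\mathrm{H}(\mathrm{G}(H;e))=H$ by \eqref{h-g-h-g}. The latter lets me treat $H$ as the associated heap of its own retract group, which is what makes the two constructions comparable.

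First I would identify the elements of both quotients as subsets of $H$. The elements of $H/S$ are the classes $\bar a=\{[s,t,a]\mid s,t\in S\}$ of \eqref{rel.class}, while the elements of $\mathrm{H}(\mathrm{G}(H;e)/\mathrm{G}(S;e))$ are the cosets of $\mathrm{G}(S;e)$. Since the underlying set of $\mathrm{G}(S;e)$ is $S$ and the retract product is $x\cdot_e a=[x,e,a]$ by \eqref{gr-from-heap}, the (right) coset of $a$ is $\{[x,e,a]\mid x\in S\}$. The key observation is that this coincides with $\bar a$: given $s,t\in S$, the element $x=[s,t,e]$ lies in $S$ because $S$ is a sub-heap, and $[x,e,a]=[[s,t,e],e,a]=[s,t,a]$ by the associativity and Mal'cev rules \eqref{heap.def}; conversely each $[x,e,a]$ is of the form $[s,t,a]$ with $s=x$, $t=e$. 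Hence the $\sim_S$-classes and the cosets are the same subsets of $H$, so both heaps share one underlying set. Normality of $\mathrm{G}(S;e)$ guarantees that left and right cosets agree, so there is no ambiguity in this identification.

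Second I would compare the operations on this common set. On $H/S$ the ternary operation is $[\bar a,\bar b,\bar c]=\overline{[a,b,c]}$ by \eqref{quotient}. On $\mathrm{H}(\mathrm{G}(H;e)/\mathrm{G}(S;e))$ it is, by \eqref{heap-form-gr}, the product $\bar a\cdot\bar b^{-1}\cdot\bar c$ computed in the quotient group, which equals $\overline{a\cdot_e b^{-1}\cdot_e c}$ because the group operation on the quotient is induced from $\mathrm{G}(H;e)$. Finally $a\cdot_e b^{-1}\cdot_e c=[a,b,c]$ is exactly the content of $\mathrm{H}(\mathrm{G}(H;e))=H$ from \eqref{h-g-h-g}, verified in one line from \eqref{heap.def}. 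Thus the two ternary operations agree on the common set of classes, and the heaps are equal.

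The genuinely routine part is the two bracket manipulations. The only point that requires care is the set-level matching of classes with cosets: there one must use that $S$ is a sub-heap to absorb $[s,t,e]$ back into $S$, and invoke normality to rule out a left/right coset discrepancy. I do not anticipate any obstacle beyond this bookkeeping; in particular the argument yields equality, not merely isomorphism, which is what the statement asserts.
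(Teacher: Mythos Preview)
Your proof is correct and follows essentially the same approach as the paper: both first identify the $\sim_S$-classes with the cosets of $\mathrm{G}(S;e)$ as subsets of $H$, and then check that the operations agree. The only cosmetic difference is that the paper records the intermediate equality of groups $\mathrm{G}(H/S;S)=\mathrm{G}(H;e)/\mathrm{G}(S;e)$ and then applies $\mathrm{H}$, whereas you compare the two ternary operations directly; the content is the same.
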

\begin{proof}
In view of \eqref{rel.class} $x\in \bar{a}$ if and only if there exist $s,t\in S$ such that $x=[s,t,a]= s\cdot_et^{-1}\cdot_e a$, i.e.\ if and only if $x$ is in the coset $\gG(S,e)\cdot_e a:= \{s'\cdot_e a\;|\; s'\in S\}$. Thus the sets of equivalence classes of $\sim_S$ and the cosets in $\mathrm{G}(H;e)/\mathrm{G}(S;e)$ are mutually equal. Furthermore, since $S=\bar{e}$, for all $a,b\in H$,
$$
\bar{a}\cdot_S\bar{b} = [\bar{a},\bar{e},\bar{b}] = \overline{[a,e,b]} = \overline{a\cdot_e b},
$$
which, in the view of the aforementioned equality establishes the equality of groups 
$$
\mathrm{G}(H/S;S) = \mathrm{G}(H;e)/\mathrm{G}(S;e).
$$
The claim follows by application on $\mathrm{H}$ to both sides of this equality and \eqref{h-g-h-g}. 
\end{proof}

For any subset $X$ of a heap $H$, a sub-heap generated by $X$, denoted by $\langle X\rangle$, is equal to the intersection of all sub-heaps containing $X$. If $X$ is a singleton set, then $ \langle X\rangle = X$. If $H$ is an Abelian heap then $\langle X\rangle$ can be described explicitly as
\begin{equation}\label{gen.sub}
\langle X\rangle = \{\mh{x_1,\ldots, x_{2n+1}}\; |\; n\in \NN, x_i \in X\}.
\end{equation}

\subsection{Trusses and their modules} \label{sec.truss}
Recall from \cite{Brz:tru} or \cite{Brz:par} that a {\em truss} is an Abelian heap $T$ together with an associative binary operation (denoted by juxtaposition and called {\em multiplication}) that distributes over the heap operation, i.e., for all $s,t,t',t''\in T$,
\begin{equation}\label{truss.dist}
s[t,t',t''] = [st, st', st''] \quad \mbox{and} \quad [t,t',t'']s = [ts, t's, t''s].
\end{equation}
A truss is said to be {\em unital} or to have identity, if there is an identity for its multiplication. The identity is typically denoted by 1. A truss is said to be {\em ring-type} if there exists an element $0\in T$, called an {\em absorber} or {\em zero} such that, for all $t\in T$, $t0=0t=0$. In this case $T$ with the retract group structure $+_0$ is a ring. In the opposite direction, if $(R,+,\cdot)$ is a ring, then $(\mathrm{H}(R,+),\cdot)$ is a ring-type truss, which we denote by $\mathrm{T}(R)$ and refer to as the {\em truss associated to a ring}. If $R$ is unital, then $\mathrm{T}(R)$ is unital. Since every homomorphism of groups is a homomorphism of corresponding heaps, $\tT$ is a functor from the category of (unital) rings  into the category of (unital) trusses. Similarly, if $(B,+,\cdot)$ is a two-sided brace, then $(\mathrm{H}(B,+),\cdot)$ is a unital (but not a ring-type if $B$ is non-trivial, i.e.\ not the brace on a singleton set) truss, which we denote by $\mathrm{T}(B)$ and call the truss associated to a brace. A fundamental example of a (unital) truss is the endomorphism truss of an Abelian heap, $E(H) = \ahrd(H,H)$, which has the heap operation defined as in \eqref{mor.heap} and multiplication given by the composition of morphisms. Equivalently, $E(H)$ can be seen as a semi-direct product of any (isomorphic) group obtained from the heap operation and a chosen element of $H$ with the endomorphism monoid of this group (see \cite{Brz:par} Section 3.8).

A heap homomorphism between two trusses is a truss homomorphism if it respects multiplications. 
In case of unital trusses we require in addition that morphisms preserve identities.
In an obvious way, the terminal object $\star$ (i.e.\ the singleton set with the unique ternary operation) of the category $\ahrd$ is also a terminal object of both categories of trusses and unital trusses,  and the empty set is the initial object in the former (but not the latter).

\label{sec.mod}
Let $T$ be a truss. A {\em left $T$-module} is an Abelian heap $M$ together with an associative left action $\lambda_M: T\times M \to M$ of $T$ on $M$ that distributes over heap operations, i.e., writing $t\cdot m$ for $\lambda_M(t,m)$ one requires that, for all $t,t',t''\in T$ and $m,m',m''\in M$,
\begin{subequations}\label{module}
\begin{equation}\label{module1}
t\cdot(t'\cdot m) = (tt')\cdot m, 
\end{equation}
\begin{equation}\label{module2}
 [t,t',t'']\cdot m = [t\cdot m,t'\cdot m,t''\cdot m] , 
\end{equation}
\begin{equation}\label{module3}
 t\cdot [m,m',m''] = [t\cdot m,t\cdot m',t\cdot m''].
\end{equation}
\end{subequations}
If $T$ is a unital  truss and the action satisfies $1\cdot m = m$, then we say that $M$ is a {\em unital} or {\em normalised} module. A module homomorphism is a homomorphism of heaps between two modules that also respects the actions. The category of left $T$-modules is denoted by $T$-$\Mod$, 
and the heaps of homomorphisms between modules $M$ and $N$ are denoted by $\lhom T MN$. In fact the category of left $T$-modules is enriched over category $\ahrd$ of Abelian heaps. 
The terminal heap $\star$ with the unique possible action is the terminal object in $T$-$\Mod$, and the empty heap is the initial object.

An element $e$ of a left $T$-module $M$ is called an {\em absorber}, provided
\begin{equation}\label{absorb}
t\cdot e = e, \qquad \mbox{for all $t\in T$},
\end{equation}
i.e.\ it is invariant under the $T$-action. The set of all absorbers of a module $M$ is denoted by $\abs M$. 

If $e$ is an absorber, then the action (left) distributes over the Abelian group operation on $M$ associated to $e$ as in \eqref{gr-from-heap}, i.e.\ over the addition in $\gG(M;e)$. Note, however, that this does not mean that $\gG(M;e)$ is a module over a ring (unless $T$ is a ring-type truss).

A sub-heap $N$ of a left $T$-module $M$ is called a {\em submodule} if it is closed under the $T$-action. The $T$-action descents to the quotient heap $M/N$, making it a $T$-module with absorber $N$.

The notions of right modules are introduced symmetrically. As the left module and right module theories are completely symmetric (a right module of a given truss is a left module over the opposite truss), we discuss left modules only. Thus the term `module' means `left module' here.
 
 \section{Free heaps and coproducts of heaps and modules}\label{sec.free.heap}
All the categories discussed in the preceding section, that is categories of heaps, Abelian heaps, trusses  and their modules are varieties of algebras (in the sense of the universal algebra), hence they have free objects, limits, coproducts, euqalisers, coequalisers etc., see e.g.\ \cite{Ber:inv}, \cite{BurSan:uni}. In this section we give explicit constructions of free heaps, free Abelian heaps, coequalisers of heaps and coproducts of Abelian heaps. We start by discussing free heaps.

Let $X$ be a (non-empty) set. We define the set of {\em reduced words in $X$} as the set $W(X)$ of all odd-length words in elements of $X$ such that no consecutive letters are the same, i.e.\
$$
W(X):=\{x_{1}x_{2}\ldots x_{2n+1}\; |\; x_{i}\neq x_{i+1}\in X, n\in \NN  \}.
$$
Note that $W(X)$ is an infinite set as long as $X$ has at least two elements. Given a word $w\in W(x)$, we denote by $\rev{w}$ the opposite word, i.e.\ 
$$
\left(x_{1}x_{2}\ldots x_{2n+1}\right)^\circ =x_{2n+1}x_{2n}\ldots x_{1}.
$$
On the set $W(X)$ we define a ternary operation $[---]$ by {\em grafting and pruning}: given $u,v,w \in W(X)$, the reduced word $[u,v,w]$ is obtained by systematic removing (or pruning) all pairs of consecutive identical letters from the word $u\rev{v} w$ obtained by concatenation (or grafting) of $u$, $\rev{v}$ and $w$. Thus, in particular and for instance if $u$ is any reduced word and $w=x_{1}x_{2}\ldots x_{2n+1}$, then the step-by-step pruning process leading to $[u,w,w]$ is
$$
\begin{aligned}
u\rev{w} w &= ux_{2n+1}x_{2n}\ldots x_{1}x_{1}x_{2}\ldots x_{2n+1} \lra ux_{2n+1}x_{2n}\ldots x_{2}x_{2}\ldots x_{2n+1}\\
&\lra ux_{2n+1}x_{2n}\ldots x_{3}x_{3}\ldots x_{2n+1}
\lra \ldots \lra ux_{2n+1}x_{2n+1} = u.
\end{aligned}
$$
Note that this process is not affected by whether the word $u$ ends with any of the letters $x_i$. This shows that $[u,w,w] =u$. By similar arguments one verifies the other Mal'cev identity. Since concatenation is an associative operation and removing pairs of consecutive  identical letters of several concatenated words yields the same result irrespective of the order in which concatenated words are pruned, $[---]$ is an associative operation (in the sense of \eqref{heap.def}). Thus $(W(X), [---])$ is a heap, which we denote by $\Hh(X)$. 

\begin{lemma}\label{lem.free.heap}
The heap $\Hh(X)$ is the free heap on $X$, i.e., for any heap $H$ and  any function $\varphi: X\to H$, there exists unique filler $\wphi$ in the category of heaps of the following diagram:
$$
\xymatrix{X \ar[rr]^-{\iota_X} \ar[dr]_-\varphi&& \Hh(X) \ar@{-->}[dl]^-{\exists !\, \wphi }
\\
& H, &}
$$
where $\iota_X$ is the inclusion of $X$ into $W(X)$.
\end{lemma}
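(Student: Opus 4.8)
The plan is to prove the universal property by exhibiting the filler explicitly and then arguing its uniqueness. Throughout I may assume $H$ is non-empty, since $X\neq\emptyset$ and so, if $H$ were empty, there would be no function $\varphi\colon X\to H$ and the statement would be vacuous. For existence I would define $\wphi\colon W(X)\lra H$ on a reduced word by iterating the ternary operation in the left-nested fashion,
$$
\wphi(x_1x_2\cdots x_{2n+1}) = [[\cdots[\varphi(x_1),\varphi(x_2),\varphi(x_3)],\cdots],\varphi(x_{2n}),\varphi(x_{2n+1})],
$$
the outermost bracket being applied last. On one-letter words this returns $\varphi(x)$, so $\wphi\circ\iota_X=\varphi$ is immediate and $\wphi$ is manifestly a well-defined function. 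To compute with it I would fix any $e\in H$ and invoke the identification $H=\mathrm{H}(\mathrm{G}(H;e))$ from \eqref{h-g-h-g}, under which $[a,b,c]=ab^{-1}c$ in the retract group $\mathrm{G}(H;e)$; the formula above then becomes the alternating product $\varphi(x_1)\varphi(x_2)^{-1}\varphi(x_3)\cdots\varphi(x_{2n+1})$. I stress that the value of $\wphi$ is intrinsic, depending only on the heap operation, and that the group is merely a bookkeeping device, so the choice of $e$ is immaterial.

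The heart of the existence argument is verifying that $\wphi$ is a heap homomorphism, that is $\wphi([u,v,w])=[\wphi(u),\wphi(v),\wphi(w)]$. I would extend the alternating-product formula $P$ to arbitrary odd-length (not necessarily reduced) words and record two facts. First, $P$ is invariant under pruning: deleting a pair of consecutive equal letters removes a factor $\varphi(x)^{\pm 1}\varphi(x)^{\mp 1}=e$ and leaves the parities, hence the signs, of the remaining letters unchanged. Second, a sign-bookkeeping computation shows that for reduced words $u,v,w$ one has $P(u\,\rev{v}\,w)=P(u)P(v)^{-1}P(w)$; the key point is that, because $v$ has odd length, reversing it to $\rev{v}$ places it at positions of alternating sign that begin and end with a minus, so that its alternating product is exactly $P(v)^{-1}$. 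Since $[u,v,w]$ is by definition the pruning of $u\,\rev{v}\,w$, combining the two facts gives $\wphi([u,v,w])=P([u,v,w])=P(u\,\rev{v}\,w)=P(u)P(v)^{-1}P(w)=[\wphi(u),\wphi(v),\wphi(w)]$, as required.

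For uniqueness, suppose $\psi\colon\Hh(X)\lra H$ is any heap morphism with $\psi\circ\iota_X=\varphi$. A short induction shows that in $\Hh(X)$ every reduced word is recovered as the left-nested bracket of its own letters, $x_1\cdots x_{2n+1}=[[\cdots[x_1,x_2,x_3],\cdots],x_{2n},x_{2n+1}]$: at each grafting step the newly adjoined letters differ from the neighbouring ones (the word being reduced), so no pruning occurs and the bracket simply concatenates. Applying $\psi$, using that it preserves the ternary operation, and substituting $\psi(x_i)=\varphi(x_i)$ forces $\psi$ to agree with the left-nested formula defining $\wphi$, whence $\psi=\wphi$.

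I expect the main obstacle to be the sign-bookkeeping identity $P(u\,\rev{v}\,w)=P(u)P(v)^{-1}P(w)$, i.e.\ tracking how the alternating signs of the concatenated word $u\,\rev{v}\,w$ line up across the three blocks, and in particular checking that reversing the odd-length block $v$ yields $P(v)^{-1}$ rather than $P(v)$. Once this and pruning-invariance are established, both the homomorphism property and uniqueness follow routinely.
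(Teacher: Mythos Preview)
Your proof is correct and follows the same approach as the paper, which simply records the formula $\wphi(x_1x_2\cdots x_{2n+1})=\varphi(x_1)\varphi(x_2)\cdots\varphi(x_{2n+1})$ without further verification. Your use of the retract group $\mathrm{G}(H;e)$ to translate the iterated heap operation into an alternating product, and then to check pruning-invariance and the identity $P(u\,\rev{v}\,w)=P(u)P(v)^{-1}P(w)$, is a clean way to supply the details the paper leaves implicit.
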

\begin{proof}
Given a function $\varphi: X\to H$, the required unique heap morphism is defined by
$$
\wphi: \Hh(X)\lra H, \qquad  x_{1}x_{2}\ldots x_{2n+1}\lto \varphi(x_{1})\varphi(x_{2})\ldots \varphi(x_{2n+1}).
$$
\end{proof}

For further convenience let us denote a free group generated by the set $X$ as $\Gg(X).$
\begin{lemma}\label{lem.heap:group}
Any free heap can be associated with a free group. Moreover $$\mathrm{H}(\Gg(X\setminus \{x\}))\cong\Hh(X),$$
where $X$ is a non-empty set and $x\in X$.
\end{lemma}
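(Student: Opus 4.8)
The plan is to prove the isomorphism by showing that $\mathrm{H}(\Gg(X\setminus\{x\}))$, equipped with an appropriate insertion of $X$, \emph{also} satisfies the universal property of the free heap on $X$ recorded in Lemma~\ref{lem.free.heap}; since two free objects on the same set are isomorphic, this forces it to be isomorphic to $\Hh(X)$. The asymmetry of the formulae \eqref{h-g-h-g} — one loses a generator and gains a basepoint when passing between groups and heaps — is exactly what accounts for removing the single element $x$. Accordingly, I would first fix the insertion map $\iota\colon X\lra \mathrm{H}(\Gg(X\setminus\{x\}))$ sending each $y\in X\setminus\{x\}$ to the corresponding free generator of $\Gg(X\setminus\{x\})$ and sending the distinguished element $x$ to the group identity $e$.

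For existence of the filler I would, given a heap $H$ and a function $\varphi\colon X\to H$, pass to the retract group $\mathrm{G}(H;\varphi(x))$ with neutral element $\varphi(x)$. The restriction of $\varphi$ to $X\setminus\{x\}$ is a function into (the underlying set of) this group, so by the universal property of the free group there is a unique group homomorphism $\psi\colon \Gg(X\setminus\{x\})\to \mathrm{G}(H;\varphi(x))$ with $\psi(y)=\varphi(y)$ for $y\neq x$. As every group homomorphism is a heap homomorphism, $\mathrm{H}(\psi)$ is a heap morphism $\mathrm{H}(\Gg(X\setminus\{x\}))\to \mathrm{H}(\mathrm{G}(H;\varphi(x)))=H$, the last equality by \eqref{h-g-h-g}. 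I would then check that this morphism extends $\varphi$: on $y\neq x$ it returns $\psi(y)=\varphi(y)$, while on $x$ it returns $\psi(e)$, which is the neutral element $\varphi(x)$ of $\mathrm{G}(H;\varphi(x))$ because $\psi$ preserves identities.

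For uniqueness I would argue that $\iota(X)$ generates $\mathrm{H}(\Gg(X\setminus\{x\}))$ as a heap, so that any heap morphism out of it is determined by its values on $\iota(X)$ and hence must coincide with the filler just constructed. Concretely, in the associated heap one has $[a,e,b]=ab$ and $[e,a,e]=a^{-1}$, so the sub-heap generated by $\{e\}\cup(X\setminus\{x\})$ is closed under group multiplication and under inversion of the generators and therefore exhausts the whole free group; equivalently, by the sub-heap/subgroup correspondence, the retract at $e$ of this sub-heap is the subgroup generated by the free generators, which is all of $\Gg(X\setminus\{x\})$. This establishes that $\mathrm{H}(\Gg(X\setminus\{x\}))$ is a free heap on $X$, and comparing it with $\Hh(X)$ through the unique morphism extending $\iota$ yields the asserted isomorphism (with the edge case $X=\{x\}$ giving two singleton heaps).

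The step I expect to be most delicate is not any single computation but the bookkeeping around the basepoint: one must consistently match the removed element $x$ with the group identity and verify that freeness of the group — a universal property over \emph{sets} — correctly produces heap morphisms through the retract, including the check $\psi(e)=\varphi(x)$. The remaining ingredients, namely the two identities $[a,e,b]=ab$ and $[e,a,e]=a^{-1}$ and the generation claim, are routine.
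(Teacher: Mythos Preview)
Your proof is correct and uses essentially the same ingredients as the paper: the same insertion $\iota$ sending $x\mapsto e$ and $y\mapsto y$ for $y\neq x$, and the same interplay between the universal property of the free heap $\Hh(X)$ and that of the free group $\Gg(X\setminus\{x\})$. The only difference is packaging: the paper constructs the map $\Hh(X)\to\mathrm{H}(\Gg(X\setminus\{x\}))$ as the filler from Lemma~\ref{lem.free.heap} and its inverse as the group-homomorphism filler $\Gg(X\setminus\{x\})\to\mathrm{G}(\Hh(X);x)$, whereas you instead verify directly that $\mathrm{H}(\Gg(X\setminus\{x\}))$ with $\iota$ satisfies the free-heap universal property (your generation argument replacing the paper's implicit check that the two fillers are mutual inverses).
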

\begin{proof}
 Let $X$ be a (non-empty) set, then isomorphism needed to prove this statement is a unique filler of the diagram in Lemma~\ref{lem.free.heap}, where the function $\varphi$ is defined as follows:
$$
\varphi:X\lra \mathrm{H}(\Gg(X\setminus \{x\})), \qquad 
 y\lto 
 \begin{cases}
 y, & y\neq x\cr
  e, & y=x,
  \end{cases}
$$ 
where $e$ is the neutral element  of $\Gg(X\setminus \{x\}).$ The inverse to  $\varphi$ is given by the group homomorphism (seen as a heap homomorphism) arising from the universal property of the free group $\Gg(X\setminus \{x\})$ applied to the function 
$$
\psi :X\setminus\{x\} \lra \mathrm{G}(\Hh(X);x),\qquad 
y \lto y.
$$
\end{proof}
\begin{corollary}
Any non-empty sub-heap of a free heap is free.
\end{corollary}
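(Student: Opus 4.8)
The plan is to reduce the statement to the Nielsen--Schreier theorem by passing through the heap--group dictionary recorded in \eqref{h-g-h-g} and Lemma~\ref{lem.heap:group}. Let $F$ be a free heap and let $S\subseteq F$ be a non-empty sub-heap. Since $S$ is non-empty I may fix an element $e\in S$ and pass to retract groups. By the characterisation of sub-heaps recalled just before Lemma~\ref{lem.frac2}, the retract $\gG(S;e)$ is a subgroup of the retract $\gG(F;e)$. So the problem becomes: show that a subgroup of the relevant retract group is free, and then translate back.

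Next I would identify $\gG(F;e)$ as a free group. By Lemma~\ref{lem.heap:group} there is a set $Y$ with $F\cong \mathrm{H}(\Gg(Y))$. A direct computation from \eqref{heap-form-gr} and \eqref{gr-from-heap} shows that the retract of $\mathrm{H}(\Gg(Y))$ at the neutral element of $\Gg(Y)$ equals $\Gg(Y)$ itself; since all retracts of a given heap are mutually isomorphic as groups by \eqref{swap} and \eqref{h-g-h-g}, it follows that $\gG(F;e)$ is isomorphic to the free group $\Gg(Y)$.

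The crux of the argument is now the classical Nielsen--Schreier theorem: every subgroup of a free group is free (in full generality, for free groups of arbitrary rank). Applying it to the subgroup $\gG(S;e)$ of the free group $\gG(F;e)$ gives that $\gG(S;e)$ is itself free, say on a basis set $Z$. This is the only non-formal input to the proof; everything surrounding it is bookkeeping within the heap--group correspondence, and this is the step I expect to carry the real weight.

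Finally I would convert back to heaps. By the first identity in \eqref{h-g-h-g} one has $S=\mathrm{H}(\gG(S;e))$, and since $\mathrm{H}$ is a functor the group isomorphism $\gG(S;e)\cong\Gg(Z)$ yields a heap isomorphism $S\cong \mathrm{H}(\Gg(Z))$. Thus $S$ is the heap associated to a free group, and it remains to see that such heaps are free. For this I apply Lemma~\ref{lem.heap:group} in its reverse reading: choosing a fresh symbol $x_0\notin Z$ and setting $X=Z\cup\{x_0\}$, so that $X\setminus\{x_0\}=Z$, the lemma gives $\mathrm{H}(\Gg(Z))\cong \Hh(X)$, which is free by Lemma~\ref{lem.free.heap}. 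Composing the isomorphisms, $S\cong\Hh(X)$ is a free heap, as claimed.
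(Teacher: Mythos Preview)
Your proof is correct and follows essentially the same route as the paper's: both reduce the statement to the Nielsen--Schreier theorem via the heap--group dictionary of Lemma~\ref{lem.heap:group}. Your version is in fact more carefully argued---you pick $e\in S$ (so that $\gG(S;e)$ makes sense), you spell out why $\gG(F;e)$ is free, and you explicitly invert Lemma~\ref{lem.heap:group} at the end by adjoining a fresh generator---whereas the paper compresses all of this into a contrapositive sketch.
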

\begin{proof}
Let us suppose that a non-empty sub-heap $S$ of the free heap $\Hh(X)$ is a non-free heap, then from Lemma $\ref{lem.heap:group}$ $\mathrm{G}(S;e)$ is a non-free subgroup  of $\mathrm{G}(\Hh(X),e)\cong \Gg(X\setminus\{e\})$ for some $e\in X.$ The Nielsen-Schreier theorem  \cite{Otto} states that every subgroup of a free group is free, and thus we obtain a contradiction with the assumption that $\mathrm{G}(S;e)$ is non-free, so $S$ is a free heap.
\end{proof}

\begin{example}\label{ex.free}
Let $X=\{0,1\}$ so that $\Hh(X)$ consists of all odd-length sequences of alternating digits 0 and 1. All such sequences are symmetric, hence $w^\circ =w$ and the heap operation on $\Hh(X)$ is given by concatenation and pruning.  By Lemma~\ref{lem.heap:group}  $\Hh(X)$ is isomorphic with the heap associated to a free group on a singleton set (i.e.\ on $X$ with one element removed),  so  $\Hh(X)$ is the heap associated with $\mathbb{Z}$.
\end{example}

Before we construct the coproduct of Abelian heaps, let us say a few words about the product of heaps, since as is the case in groups, rings or modules, the  coproduct of heaps is built on a product. The product of heaps $H_{1}$ and $H_{2}$ is the set $H_{1}\times H_{2}$ with operation defined component-wise, i.e.\
$$
[(h_{1},h_{2}),(h'_{1},h'_{2}),(h''_{1},h''_{2})]:=([h_{1},h_{1}',h_{1}''],[h_{2},h_{2}',h_{2}'']),
$$
for all $h_{1},h_{1}',h_{1}''\in H_{1}$ and $h_{2},h_{2}',h_{2}''\in H_{2}$.  That  $H_{1}\times H_{2}$ is the product of heaps can be proven in a way  analogous to the case of groups. It might be worth noting that
$$
H_{1}\times H_{2}\cong \mathrm{H}(\mathrm{G}(H_{1};e_{1})\times \mathrm{G}(H_{2};e_{2})),
$$
or equivalently by \eqref{h-g-h-g},
$$ 
\mathrm{G}(H_{1}\times H_{2};(e_{1},e_{2}))\cong \mathrm{G}(H_{1};e_{1})\times \mathrm{G}(H_{2};e_{2}).
$$
To prove this statement one should consider universal properties for product of groups and heaps in a similar way to the proof of Lemma~\ref{lem.heap:group}. The homomorphism 
$$
H_{1}\times H_{2}\lra \mathrm{H}(\mathrm{G}(H_{1};e_{1})\times \mathrm{G}(H_{2};e_{2}))
$$ 
is given by the universal property of the product of groups, while its inverse is constructed by the universal property of the product of heaps.

To construct free Abelian heaps we use {\em symmetric words} of odd length in alphabet $X$, 
\begin{equation}\label{sym.word.not}
w = \sym{x_1y_1x_2\ldots y_nx_{n+1}}, \qquad x_i,y_i\in X, n\in \NN,
\end{equation}
 that are defined as follows.  Each $w$ in \eqref{sym.word.not} is a set
\begin{equation}\label{sym.word}
\sym{x_1y_1x_2\ldots y_nx_{n+1}} =\{ x_{\sigma(1)}y_{\hat\sigma(1)}x_{\sigma(2)}\ldots y_{\hat\sigma(n)}x_{\sigma(n+1)}\; |\; \sigma\in S_{n+1}, \hat\sigma\in S_n\}.
\end{equation}
A symmetric word is said to be {\em reduced} if it contains only reduced words. For example, $\sym{abacd}$ is a symmetric reduced word, while $\sym{abcad}$ is not, since it contains the unreduced word $aacbd$. The set of all symmetric reduced words of odd length on $X$ is denoted by $\overline{W}(X)$. Obviously, if $\sym{w}\in \overline{W}(X)$, then $\sym{\rev{w}} = \sym{w}$. From any unreduced symmetric word one can obtain a unique symmetric reduced word by pruning. Starting with any word $x_1y_1x_2\ldots y_nx_{n+1}$ we look at all permuted words $x_{\sigma(1)}y_{\hat\sigma(1)}x_{\sigma(2)}\ldots y_{\hat\sigma(n)}x_{\sigma(n+1)}$. If any of these permuted words is not reduced, we prune it by removing pairs of consecutive identical letters. The shortest remaining word will yield the required reduced symmetric word. The heap operation on  $\overline{W}(X)$ is obtained by concatenations of representatives of symmetric reduced words followed by symmetric pruning. We use notation \eqref{sym.word.not} for both an unreduced word and the one to which it can be reduced. The resulting heap is the {\em free Abelian heap} on $X$ and is denoted by $\Aa(X)$. 

\begin{remark}\label{rem.free.ab}
One can easily employ the same isomorphism as in the proof of Lemma~\ref{lem.heap:group} to observe that the free Abelian heap on a non-empty set $X$ is isomorphic to the heap associated with the free Abelian group on $X\setminus\{x\}$, for any $x\in X$.
\end{remark}

Given Abelian heaps $A$, $B$, their {\em direct sum} or {\em coproduct} $A\boxplus B$ can be constructed as follows. Start with the free Abelian heap on the disjoint union of sets $A\sqcup B$, $\Aa(A\sqcup B),$ and apply the ternary operations of $A$ and $B$ whenever possible to reduce words further to the point when no reduction is possible. In other words, we fix  $e\in \Aa (A\sqcup B)$ and take the sub-heap $C_{e}$ of the $\Aa(A\sqcup B)$ generated by 
$$
[[a,a',a''],[a,a',a'']_{A},e],\quad [[b,b',b''],[b,b',b'']_{B},e],
$$
where $a,a',a''\in A,$ $b,b',b''\in B$,  and $[---],[---]_{A},[---]_{B}$ are ternary operations in $\Aa(A\sqcup B)$, $A$ and $B$, respectively, and consider the quotient heap $A\boxplus B=\Aa(A\sqcup B)/C_{e}.$ One can prove that this defines a congruence on $\Aa(A\sqcup B)$ the equivalence classes of which are denoted by $\osym{s_1s_2\ldots s_{2n+1}}$, $s_i\in A\sqcup B$, and which form the Abelian heap  $A\boxplus B$. More explicit  ways of describing the elements of $A\boxplus B$ are possible.

\begin{proposition}\label{prop.sum}
Let $A$ and $B$ be Abelian heaps.
\begin{zlist}
\item The direct sum $A\boxplus B$ contains only the following (types) of symmetric words in $A$ and $B$:
\begin{blist}
\item Elements $a\in A$ and $b\in B$.
\item Three letter words $\osym{a bb'}$ and $\osym{aa'b}$, with $a\neq a'\in A$ and $b\neq b'\in B$.
\item Alternating words $\osym{a_1b_1a_2\ldots a_{n}b_na_{n+1}}$ and $\osym{b_1a_1b_2\ldots b_{n}a_nb_{n+1}}$, where $a_i\in A$ and $b_i\in B$.
\end{blist}
\item Fix any $e_A$, $e_B\in B$. Then any of the multi-letter words in statement (1) can be written as  
$$
\osym{a be_B}, \; \osym{bae_A}, \; \osym{abe_Ae_B\ldots e_Ae_Be_A}, \;  \osym{bae_Be_A\ldots e_Be_Ae_B}, \quad a\in A,\, b\in B.
$$
\end{zlist}
\end{proposition}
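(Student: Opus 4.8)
The plan is to work throughout with the explicit description of $A\boxplus B$ as the set of classes $\osym{s_1s_2\ldots s_{2n+1}}$, $s_i\in A\sqcup B$, where on top of the free-Abelian-heap relations (independent permutation of the letters in odd and in even positions, deletion of a pair of equal letters occupying one odd and one even position, and insertion or deletion of two consecutive equal letters, as recorded after \eqref{tran}) the congruence $C_e$ permits one extra move: any three \emph{consecutive} letters of the same type collapse to a single letter of that type, i.e.\ $s_is_{i+1}s_{i+2}\rightsquigarrow [s_i,s_{i+1},s_{i+2}]_A$ when $s_i,s_{i+1},s_{i+2}\in A$ (this is legitimate since, by \eqref{def.heap.con}, any three consecutive letters may be re-bracketed into an inner ternary operation), and likewise for $B$. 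First I would record that both a collapse and a pruning strictly decrease the length of a representative, so that repeatedly applying them to a given class terminates in a representative admitting no further move.

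For the bookkeeping in part~(1) I would attach to a representative the four numbers $(p,q,r,s)$ counting, respectively, the $A$-letters in odd positions, the $B$-letters in odd positions, the $A$-letters in even positions, and the $B$-letters in even positions; since the length is $2n+1$ these satisfy $p+q=r+s+1$. Using commutativity one can bring two $A$-letters of one parity and one of the other into consecutive position, so a representative admits an $A$-collapse precisely when $p,r\geq 1$ and $p+r\geq 3$, and symmetrically for $B$. Hence a representative is collapse-irreducible exactly when each of the pairs $(p,r)$ and $(q,s)$ has a zero entry or equals $(1,1)$, and it is prune-irreducible exactly when no element of $A$ (resp.\ of $B$) occupies both an odd and an even position. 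The heart of the argument is then a finite case analysis: running over the admissible shapes of $(p,r)$ and of $(q,s)$ and imposing $p+q=r+s+1$, every solution is one of $(1,0,0,0)$, $(0,1,0,0)$, $(1,1,0,1)$, $(1,1,1,0)$, $(k{+}1,0,0,k)$ or $(0,k{+}1,k,0)$ with $k\geq 1$. Reading these back as words yields exactly the single letters of~(a), the three-letter words of~(b) — prune-irreducibility in the two $(1,1)$ cases forcing $a\neq a'$ and $b\neq b'$ — and the two families of alternating words of~(c), proving~(1).

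For part~(2) I would fix $e_A,e_B$ and normalise the multi-letter forms by transporting content onto a single $A$- and a single $B$-letter. The key manipulation is length-preserving: inserting a pair $e_Ae_A$ leaves the class unchanged but produces an $A$-letter in an even position, and after permuting letters within each parity one creates a block $a_i\,e_A\,a_{i+1}$ of three consecutive $A$-letters whose collapse replaces $a_i,a_{i+1}$ by the single letter $[a_i,e_A,a_{i+1}]_A=a_i\cdot_{e_A}a_{i+1}$ (cf.\ \eqref{gr-from-heap}), leaving a spare $e_A$ behind. Iterating this and its $B$-analogue pushes all $A$-content into one odd position and all $B$-content into one even position, converting every remaining letter into $e_A$ or $e_B$; since each step preserves length, the output has the same length $2n+1$ as the input and is therefore precisely of the stated shape $\osym{a\,b\,e_Ae_B\ldots e_Ae_A}$ or its $B$-leading counterpart, the three-letter cases reducing to $\osym{abe_B}$ and $\osym{bae_A}$.

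I expect the main obstacle to lie in part~(1): one must argue that the permutation freedom coming from commutativity, together with the transposition rule \eqref{tran}, genuinely realises a collapse whenever the counts permit, and that the two irreducibility conditions translate \emph{exactly} into the side conditions $a\neq a'$, $b\neq b'$ of the statement without any stray irreducible shape escaping the classification. A secondary subtlety, which the plan must respect, is that $(p,q,r,s)$ are features of a representative rather than invariants of a class, so the conclusion has to be phrased as ``every class possesses a representative of one of the listed forms''; here the length function supplies the well-foundedness that makes the reduction argument rigorous.
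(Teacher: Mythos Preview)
Your proposal is correct. Part~(2) is essentially the paper's own argument: insert a pair $e_Ae_A$ or $e_Be_B$, permute within parities to create a same-type triple, collapse, and iterate; the paper carries this out by explicit heap computations rather than invoking the retract product $\cdot_{e_A}$, but the mechanism is identical.

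Part~(1) is where your approach genuinely diverges. The paper argues locally: it observes that no irreducible word can contain a block of three or more consecutive same-type letters, and then shows that a block of exactly two same-type letters (say $abb'a'$) inside a word of length $\geq 5$ can always be dissolved by borrowing a third $B$-letter from elsewhere in the word, permuting it between $a$ and $a'$, and collapsing; the length-3 case is handled separately. Your approach is global and combinatorial: you encode a representative by the parity-counts $(p,q,r,s)$, translate collapse- and prune-irreducibility into arithmetic constraints on these counts, and then exhaust the finitely many shapes compatible with $p+q=r+s+1$. This buys you a clean termination argument (length strictly decreases) and a complete enumeration that visibly leaves no stray case, at the cost of the small verification that commutativity really lets you realise a collapse whenever the counts say $p,r\geq 1$, $p+r\geq 3$ --- which you correctly flag and which follows by placing the letters at positions $1,2,3$ (or $2,3,4$) and invoking \eqref{def.heap.con}. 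The paper's argument is shorter but relies on the reader tracking why the ``borrow a third letter of the right parity'' move is always available; yours makes that availability explicit.
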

\begin{proof}
(1) It is clear that $A\boxplus B$ contains words listed in (a) and (b) and that such words cannot be reduced any further. It is also clear that there could be no clusters of more than two consecutive letters from either $A$ and $B$. We will show that any cluster of two letters from the same alphabet can be removed from a word of length at least five. Taking into account the $A$-$B$ symmetry suffices it consider clusters $abb'a'$ with  $a,a'\in A$, $b,b'\in B$ within a symmetric word. If this word has more than five letters, then it contains an additional element of $B$. Depending on the parity of its position, it can be swapped with either $a$ or $a'$ to form a cluster of three letters in $B$ in-between $a$ and $a'$, which then is reduced to a single element by using the heap operation in $B$. In case the word has five letters, by swapping and using heap operations it can be reduced to an at most three letter word of type $abb'$ or $aa'b$.
This completes the proof.

(2)
Using the axioms of an Abelian heap and the definition of $A\boxplus B$, we can compute
$$
\osym{abb'} = \overline{[\sym{a bb'},e_B,e_B]} = \osym{a[bb'e_B]e_B} = \osym{ab''e_B},
$$
with $b'' = [b,b',e_B]$ as required. The case of $\osym{aa'b}$ is dealt with in a similar way. Words in alternating letters can be transferred to the prescribed form by consecutive applying of the above procedure. Explicitly, for $w=a_1b_1\ldots a_{n-1}$,
$$
\begin{aligned}
\osym{wb_{n-1}a_nb_na_{n+1}} &= \overline{[\sym{wb_{n-1}a_nb_na_{n+1}},e_A, e_A]} = \osym{wb_{n-1} a_nb_na_{n+1}e_A e_A} \\
&= \osym{wb_{n-1} a_ne_Aa_{n+1}b_ne_A}  = \osym{wb_{n-1}a'_nb_ne_A}\\
& = \osym{wb_{n-1}a'_nb_ne_Be_Be_A} = \osym{wb_{n-1}e_Bb_na'_ne_Be_A}
= \osym{w b'_{n-1}a'_ne_Be_A},
\end{aligned}
$$
etc., with $a'_n=[a_n,e_A,a_{n+1}]$ and $b'_{n-1}=[b_{n-1},e_B,b_{n}]$. 
\end{proof}

We refer to sequences of the alternating $e_A$ and $e_B$ as to {\em tails}. 

\begin{proposition}\label{prop.sum.uni}
Let $A$ and $B$ be Abelian heaps. Together with the inclusions $\iota_A: A\lra A\boxplus B$, $a\lto a$, and 
$\iota_B: B\lra A\boxplus B$, $b\lto b$, $A\boxplus B$ is a coproduct in the category of Abelian heaps.
\end{proposition}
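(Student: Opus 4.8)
The plan is to derive the universal property straight from the presentation $A\boxplus B=\Aa(A\sqcup B)/C_e$, by combining the universal property of the free Abelian heap with that of a quotient by a normal sub-heap. So I fix an arbitrary Abelian heap $H$ together with heap homomorphisms $f\colon A\to H$ and $g\colon B\to H$, and aim to produce a unique heap homomorphism $h\colon A\boxplus B\to H$ satisfying $h\circ\iota_A=f$ and $h\circ\iota_B=g$.

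First I would regard $f$ and $g$ jointly as a single set map $A\sqcup B\to H$, forgetting for the moment that they respect the heap operations. Since $\Aa(A\sqcup B)$ is the free Abelian heap on $A\sqcup B$ (Remark~\ref{rem.free.ab}, in analogy with Lemma~\ref{lem.free.heap}), this set map extends to a unique heap homomorphism $\widetilde h\colon\Aa(A\sqcup B)\to H$ that sends a word to the heap combination of the images of its letters. The heart of the argument is then to check that $\widetilde h$ is constant on the sub-heap $C_e$, with value $\widetilde h(e)$. By \eqref{gen.sub} every element of $C_e$ is an odd heap word in its generators, and \eqref{heap.def} collapses any odd heap word all of whose letters share a common value to that value; so it suffices to evaluate $\widetilde h$ on the generators. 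For a generator $[[a,a',a''],[a,a',a'']_A,e]$, the fact that $\widetilde h$ is a heap homomorphism sends its first entry to $[f(a),f(a'),f(a'')]$, while the fact that $f$ is a heap homomorphism sends its second entry, the single letter $[a,a',a'']_A\in A$, to the \emph{same} element $[f(a),f(a'),f(a'')]$; the Mal'cev identity then forces the whole generator to $\widetilde h(e)$. The $B$-generators are handled identically with $g$ in place of $f$, and taking $a=a'=a''$ exhibits $e\in C_e$, confirming $\widetilde h(e)$ as the common value.

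With $\widetilde h$ constant on the (automatically normal) sub-heap $C_e$, I would invoke the universal property of the quotient: if $a\sim_{C_e}b$ then $[a,b,s]\in C_e$ for some $s\in C_e$, so $[\widetilde h(a),\widetilde h(b),\widetilde h(e)]=\widetilde h(e)$ (using $\widetilde h(s)=\widetilde h(e)$), whence $\widetilde h(a)=\widetilde h(b)$ by the cancellation characterisation \eqref{equal}. Thus $\widetilde h$ descends to a unique $h\colon A\boxplus B\to H$ with $h\circ\pi=\widetilde h$, where $\pi$ is the quotient map. The very same relations in $C_e$ are what make $\iota_A$ and $\iota_B$ heap homomorphisms in the first place (they force $\osym{[a,a',a'']_A}=[\osym{a},\osym{a'},\osym{a''}]$), and evaluating $h$ on single-letter classes gives $h\circ\iota_A=f$ and $h\circ\iota_B=g$. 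Uniqueness of $h$ follows because $\pi$ is surjective and $\Aa(A\sqcup B)$ is generated as a heap by the single letters, so $A\boxplus B$ is generated by $\iota_A(A)\cup\iota_B(B)$ (as is also visible from the element list of Proposition~\ref{prop.sum}); any two heap homomorphisms out of $A\boxplus B$ agreeing with $f$ and $g$ there must coincide.

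I expect the only genuine obstacle to be the middle step: that the relations generating $C_e$ are precisely those needed for $\widetilde h$ to descend to the quotient. This is where the separate heap structures of $A$ and $B$ get reconciled with their images in $H$, and it reduces entirely to the Mal'cev collapse $[X,X,\widetilde h(e)]=\widetilde h(e)$ with $X=[f(a),f(a'),f(a'')]$; everything surrounding it is formal bookkeeping of the free-and-relations presentation. A minor point to dispatch separately is the degenerate case $A=\emptyset$ or $B=\emptyset$ (or both), where the construction of $C_e$ lacks a base point; there the coproduct reduces to $B$, to $A$, or to the initial empty heap, respectively, and the universal property is immediate.
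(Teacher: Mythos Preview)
Your argument is correct and takes a genuinely different route from the paper's proof. The paper works entirely with the explicit normal forms of Proposition~\ref{prop.sum}: it defines the filler $\varphi\boxplus\psi$ letter-by-letter on the listed representatives and then devotes the bulk of the proof to checking, case by case (three-letter words, then alternating words by induction on length), that this definition is independent of the chosen representative. Your approach bypasses normal forms altogether by invoking the presentation $A\boxplus B=\Aa(A\sqcup B)/C_e$ directly: the universal property of the free Abelian heap produces $\widetilde h$, the Mal'cev collapse on the generators of $C_e$ shows $\widetilde h$ is constant there, and the quotient property does the rest. This is cleaner and more in the spirit of universal algebra, and it makes the well-definedness issue disappear into the single observation that $\widetilde h$ kills the relations. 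What the paper's longer computation buys is explicit control over how two normal-form representatives of the same class are related (the formulae \eqref{ab.exp}), information that is not needed for the universal property itself but which feeds into the concrete isomorphism of Proposition~\ref{prop:sum-iso}.
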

\begin{proof}
We need to prove that given an Abelian heap $H$ and heap morphisms $f: A\lra H$ and $\psi: B\lra H$, there is a unique filler $\varphi\boxplus \psi$ in the diagram:
\begin{equation}\label{sum.diag}
\xymatrix{&& H && \cr A \ar[rr]^{\iota_A}\ar[urr]^\varphi & &A\boxplus B \ar@{-->}[u]_{\varphi\boxplus \psi} & & B\ar[ll]_{\iota_B}\ar[ull]_\psi .}
\end{equation}
It is clear that the unique way of defining a heap homomorphism $\varphi\boxplus \psi$ that fits diagram \eqref{sum.diag} is to set $\varphi\boxplus \psi(a) =\varphi(a)$ and $\varphi\boxplus \psi(b) = b$, for all $a\in A,b\in B$, and then extend it to words in $A\boxplus B$ letter-by-letter. We need to assure, however, that this definition is independent on the choice of representatives in the equivalence classes of symmetrised reduced words listed in, say, statement (1) of Proposition~\ref{prop.sum}. Two classes  can be equal if and only if they are of the same type (i.e.\ starting with an element of $A$ or starting with an element of $B$ as in Proposition~\ref{prop.sum}~(1)(c), or with two elements of $A$ or two elements of $B$ as in Proposition~\ref{prop.sum}~(1)(b)), as there is no way of joining elements in the same heap to produce a single element and thus reduce the length of the word or change its type. We look at these possibilities in turn.

If 
$
\osym{ab_1b_2} = \osym{a'b_1'b_2'},
$
then using the Mal'cev identity, symmetry and the definition of heap operation in $A\boxplus B$ we find
$$
\begin{aligned}
a' &= \osym{a'ab_1b_2ab_1b_2} =\osym{a'a'b'_1b'_2ab_1b_2}
= \osym{b'_1b'_2ab_1b_2} = \osym{ab'_2[b'_1,b_1,b_2]},
\end{aligned}
$$
since $A$ and $B$ are disjoint in $A\boxplus B$ and relation is given by symmetrisation and pruning this implies that $a'=a$ and $b'_2=[b'_1,b_1,b_2]$. Therefore,

$$
\begin{aligned}
(\varphi\boxplus \psi)( \osym{a'b_1'b_2'}) &= [\varphi(a'),\psi(b_1'),\psi(b_2')]
 = [\varphi(a),\psi(b_1'),[\psi(b_1'),\psi(b_1),\psi(b_2)]]\\
 & = [\varphi(a),\psi(b_1),\psi(b_2)] = (\varphi\boxplus \psi)( \osym{ab_1b_2}) ,
\end{aligned}
$$
where we used that $\psi$ is a heap morphism and the Mal'cev identity. The other case in Proposition~\ref{prop.sum}~(1)(b) follows by the $A$-$B$-symmetry.

To treat the words listed in Proposition~\ref{prop.sum}~(1)(c) we first claim that if
\begin{equation}\label{aba.eq}
\osym{a_1b_1a_2\ldots a_{n}b_na_{n+1}} = \osym{a'_1b'_1a'_2\ldots a'_{n}b'_na'_{n+1}},
\end{equation}
then
\begin{equation}\label{ab.exp}
a'_{n+1} = \mh{a_1,a_1',\ldots,a'_n,a_{n+1}} \quad \mbox{and} \quad b'_n = \mh{b_1,b_1',\ldots,b'_{n-1},b_{n}}.
\end{equation}
We prove this assertion by induction on $n$. The case of $n=1$ follows by similar reasoning as in the case already studied (simply replace $a$ by $a_{1},$ $a'$ by $a'_{1},$ $b_2$ by $a_2$ and $b'_2$ by $a'_2$, and use the corresponding arguments). Assume that the statement holds for some $n$, and assume that
$$
\osym{a_1b_1a_2\ldots a_{n+1}b_{n+1}a_{n+2}} = \osym{a'_1b'_1a'_2\ldots a'_{n+1}b'_{n+1}a'_{n+2}}.
$$
Then, first by using the Mal'cev identities, and then by the symmetry and the definition of operation in $A\boxplus B$,
$$
\begin{aligned}
\osym{a'_2b'_2a'_3\ldots a'_{n+1}b'_{n+1}a'_{n+2}} &= \osym{a_1b_1a_2\ldots a_{n+1}b_{n+1}a_{n+2}a_1'b_1'}\\
&= \osym{a_1b_1a_2\ldots b_n[a_{n+1},a_1', a_{n+2}] b_{n+1}b_1'}\\
&= \osym{a_1b_1a_2\ldots a_n [b_n ,b_1', b_{n+1}] [a_{n+1},a_1',a_{n+2}]}.
\end{aligned}
$$
As the length of the word is $2n+1$, the inductive assumption can be applied, so that
$$
\begin{aligned}
a'_{n+2} &= [a_1,a'_2,a_2,\ldots a'_{n+1}, [a_{n+1},a_1',a_{n+2}]] = \mh{a_1,a_1',\ldots,a'_{n+1},a_{n+2}},
\end{aligned}
$$
where the fact that $A$ is an Abelian herd has been used. The formula for $b'_{n+1}$ can be derived using the second part of the conjunction in the inductive assumption. This proves that \eqref{ab.exp} holds for all $n\in \NN$.

In the situation \eqref{aba.eq}, using \eqref{ab.exp}, that both $\varphi$ and $\psi$ are heap morphisms, Mal'cev identities and the Abelian nature of $A$ and $B$, one can compute
$$
\begin{aligned}
(\varphi\boxplus \psi) &\left(\osym{a'_1b'_1\ldots  b'_{n}a'_{n+1}}\right) =\\
&= \mh{\varphi(a'_1),\psi(b'_1),\ldots, \varphi(a'_n), \psi(\mh{b_1,b_1',\ldots,b'_{n-1},b_{n}}),\varphi(a'_{n+1})}\\
&= \mh{\varphi(a'_1),\psi(b'_1),\ldots ,\varphi(a'_n), \psi(b_1),\psi(b_1'),\ldots,\psi(b'_{n-1}),\psi(b_{n}),\varphi(a'_{n+1})}\\
&= \mh{\varphi(a'_1),\psi(b'_1),\psi(b_1'),\varphi(a'_2)\ldots , \psi(b_1),\varphi(a'_n),\ldots,\psi(b'_{n-1}),\psi(b_{n}),\varphi(a'_{n+1})}\\
&= \mh{\varphi(a'_1),\varphi(a'_2),\ldots , \psi(b_1),\varphi(a'_n),\ldots,\psi(b'_{n-1}),\psi(b_{n}),\varphi(a'_{n+1})}\\
&= \ldots = \mh{\varphi(a'_1),\psi(b_1),\varphi(a'_2),\ldots , \varphi(a'_n), \psi(b_n),\varphi(\mh{a_1,a_1',\ldots,a'_n,a_{n+1}})}\\
&= \mh{\varphi(a'_1),\psi(b_1),\varphi(a'_2),\ldots , \varphi(a'_n), \psi(b_n),\varphi(a_1),\varphi(a_1'),\ldots,\varphi(a'_n),\varphi(a_{n+1})}\\
&= \mh{\varphi(a'_1),\varphi(a_1'), \psi(b_1),\varphi(a'_2),\ldots , \varphi(a'_n), \varphi(a_1), \psi(b_n),\ldots,\varphi(a'_n),\varphi(a_{n+1})}\\
&= \mh{\psi(b_1),\varphi(a'_2),\ldots , \varphi(a'_n), \varphi(a_1), \psi(b_n),\ldots,\varphi(a'_n),\varphi(a_{n+1})}\\
&= \ldots = \mh{\varphi(a_1),\psi(b_1),\ldots, \varphi(a_n), \psi(b_n), \varphi(a_{n+1})}\\
&= (\varphi\boxplus \psi) \left(\osym{a_1b_1\ldots  b_{n}a_{n+1}}\right).
\end{aligned}
$$
Thus the definition of $\varphi\boxplus \psi$ is independent on the choice of the representatives in this case. The case of the alternating words starting with elements in $B$ is dealt with in a symmetric manner (or follows by the $A$-$B$ symmetry). This completes the proof of the proposition.
\end{proof}

\begin{remark}\label{rem.sum}
Note that although Abelian heaps $A$ and $B$ can be made into Abelian groups by fixing neutral elements, say $e_A\in A$ and $e_B\in B$,  the direct sum of Abelian heaps $A\boxplus B$ is not the same as the heap  associated to the direct sum of the corresponding groups, i.e. $A\boxplus B\not= \mathrm{H}(\mathrm{G}(A;e_{A})\oplus \mathrm{G}(B;e_{B}))$. Since $\varphi$ and $\psi$ are heap morphisms in the diagram \eqref{sum.diag},
there is no need for $e_A$ and $e_B$ to be mapped to the same element of $H$ that could serve for the neutral element of the induced group structure.
\end{remark}
As in the case of Abelian groups, the explicit description of the  direct sum of two Abelian heaps in Proposition~\ref{prop.sum} can be extended to families of Abelian heaps. In case of the family $(A_x)_{x\in X}$, the direct sum $\underset{x\in X}{\boxplus} A_x$, in addition to single and three letter words $\osym{a_xa'_xa_y}$, with $a_x\neq a'_x\in A_x$ and $a_y\in A_y$, $x\neq y$, consists of words of finite odd length in which neighbouring letters come from different heaps, and in which letters from the same heap, say $A_x$, are separated by odd number of letters from heaps not labelled by $x$.

The following proposition provides one with a group-theoretic description of the coproduct of Abelian heaps.

\begin{proposition}\label{prop:sum-iso}
Let $A$ and $B$ be Abelian heaps, then 
$$
A\boxplus B\cong \mathrm{H}(\mathrm{G}(A;e_A)\oplus \mathrm{G}(B;e_B)\oplus \mathbb{Z}).
$$
\end{proposition}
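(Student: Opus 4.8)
The plan is to show that the heap $K:=\mathrm{H}(\mathrm{G}(A;e_A)\oplus\mathrm{G}(B;e_B)\oplus\ZZ)$, equipped with two suitable inclusions, is \emph{itself} a coproduct of $A$ and $B$ in $\ahrd$; since $A\boxplus B$ is a coproduct by Proposition~\ref{prop.sum.uni} and coproducts are unique up to isomorphism, this yields the claim at once. Abbreviate $G_A=\mathrm{G}(A;e_A)$ and $G_B=\mathrm{G}(B;e_B)$, written additively, so that by \eqref{h-g-h-g} the underlying set of $K$ is $G_A\oplus G_B\oplus\ZZ$ and its heap operation is $[x,y,z]=x-y+z$. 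I would take as candidate inclusions the maps
$$
\varphi\colon A\lra K,\ a\lto(a,0,0), \qquad \psi\colon B\lra K,\ b\lto(0,b,1),
$$
reading $a\in A$ and $b\in B$ as elements of $G_A$ and $G_B$ via \eqref{h-g-h-g}; a direct computation with $[a,a',a'']=a-a'+a''$ shows both are heap morphisms, and crucially $\varphi(e_A)=(0,0,0)$ since $e_A$ is the neutral element of $G_A$.

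The tool driving the verification is the observation (an immediate consequence of \eqref{gr-from-heap}) that for an Abelian heap $H$ and $e\in H$, a map $\theta\colon\mathrm{H}(G)\lra H$ with $\theta(0)=e$ is a heap morphism if and only if it is a group homomorphism $G\lra\mathrm{G}(H;e)$, because $\theta[x,0,z]=\theta(x+z)$ must equal $[\theta x,e,\theta z]=\theta x\cdot_e\theta z$. So, given any Abelian heap $H$ and heap morphisms $f\colon A\lra H$, $g\colon B\lra H$, I must produce a unique $\theta\colon K\lra H$ with $\theta\varphi=f$ and $\theta\psi=g$. Applying $\theta$ to $\varphi(e_A)=(0,0,0)$ forces $\theta(0,0,0)=f(e_A)=:e$, whereupon $\theta$ becomes a group homomorphism $\bar{\theta}\colon G_A\oplus G_B\oplus\ZZ\lra\mathrm{G}(H;e)$. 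The remaining conditions then determine $\bar{\theta}$ on each summand with no freedom left: on $G_A$ it must equal $f$ (a group map $\mathrm{G}(A;e_A)\lra\mathrm{G}(H;e)$ as $f(e_A)=e$); evaluating $\theta\psi=g$ at $e_B$ forces $\bar{\theta}(0,0,1)=g(e_B)$; and on $G_B$ it must then be $b\lto[g(b),g(e_B),e]$, which is a group homomorphism because the swap maps \eqref{swap} are group isomorphisms between the retracts based at $g(e_B)$ and at $e$. These three prescriptions assemble into a well-defined group homomorphism, hence a unique $\theta$, and a short check confirms $\theta\varphi=f$ and $\theta\psi=g$. This establishes the universal property and the isomorphism.

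I expect the genuinely delicate point to be the role of the extra $\ZZ$, and keeping the base-point bookkeeping straight around it. The summand $\ZZ$ exists exactly to accommodate that $f$ and $g$ need not send $e_A$ and $e_B$ to the same element of $H$: the generator $(0,0,1)$ records the displacement of $g(e_B)$ from $f(e_A)$, and dropping it would restrict attention to the pairs $f,g$ with $f(e_A)=g(e_B)$, which is the phenomenon flagged in Remark~\ref{rem.sum}. Recognising that $\theta(0,0,0)$ is not a free choice but is pinned by $f$, while $\bar{\theta}(0,0,1)$ absorbs $g(e_B)$, is the crux. As an independent cross-check one may instead compute the retract $\mathrm{G}(A\boxplus B;e)$ from the normal forms of Proposition~\ref{prop.sum}: the brackets $\mh{\cdots}$ collapse to alternating sums, the $A$- and $B$-letters produce the summands $G_A$ and $G_B$, and the signed length of the tail of alternating $e_A,e_B$ sweeps out $\ZZ$; since two Abelian heaps are isomorphic precisely when their retracts are, this recovers the same isomorphism.
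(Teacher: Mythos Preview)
Your proof is correct. Both you and the paper choose the same inclusions $A\to K$, $a\mapsto(a,0,0)$ and $B\to K$, $b\mapsto(0,b,1)$, and both ultimately rely on the universal property of the group direct sum $G_A\oplus G_B\oplus\ZZ$; but the arguments are packaged differently. The paper uses the already-established coproduct property of $A\boxplus B$ (Proposition~\ref{prop.sum.uni}) to produce $\varphi\colon A\boxplus B\to K$, writes $\varphi$ out explicitly on the normal forms of Proposition~\ref{prop.sum}(2), then builds an inverse $\varphi^{-1}$ from the group coproduct and asserts the two maps compose to identities. You instead verify directly that $K$ with these inclusions satisfies the universal property of a coproduct in $\ahrd$, by converting any candidate filler into a group homomorphism out of $G_A\oplus G_B\oplus\ZZ$ via the ``heap morphisms sending $0$ to $e$ are exactly group homomorphisms into $\mathrm{G}(H;e)$'' principle, and reading off its three components. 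Your route avoids both the explicit word combinatorics and the unchecked ``clearly the compositions are identities'' in the paper, at the cost of the small base-point bookkeeping you flag (the swap $\tau_{g(e_B)}^e$ to make the $G_B$-component a genuine group map). The paper's route, in exchange, makes the isomorphism explicit on elements, which is useful later when the $\tT(R)$-action is transported through it (cf.\ \eqref{action.group}).
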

\begin{proof}
The functions
$$
\begin{aligned}
\varphi_A : A & \lra \mathrm{H}(\mathrm{G}(A;e_A)\oplus \mathrm{G}(B;e_B)\oplus \mathbb{Z}), \qquad a \lto (a,e_B,0) =a,\\
\varphi_B: B & \lra \mathrm{H}(\mathrm{G}(A;e_A)\oplus \mathrm{G}(B;e_B)\oplus \mathbb{Z}), \qquad b \lto (e_A,b,1)=b+1,
\end{aligned}
$$
with understanding that whenever terms are written additively in the codomain  $e_A =e_B =0$, are heap homomorphisms. By the universal property of coproducts (cf.\ the diagram in proof of Proposition~\ref{prop.sum.uni}) there exists a unique homomorphism 
$$
\varphi: A\boxplus B\longrightarrow  \mathrm{H}(\mathrm{G}(A;e_A)\oplus \mathrm{G}(B;e_B)\oplus \mathbb{Z}),
$$ 
which restricts to $\varphi_A$ on $A$ and $\varphi_B$ on $B$. In terms of words in Proposition~\ref{prop.sum}~(2) the homomorphism $\varphi$ comes out as
$$
\begin{aligned}
\varphi\left(\osym{a be_B}\right) &= a-b , \qquad  \varphi\underbrace{\left( \osym{abe_Ae_B\ldots e_Ae_Be_A}\right)}_{e_{A}\  \text{appears}\  n\text{-times}} = a-b -n,\\
\varphi\left(\osym{bae_A}\right) &= b-a+1,   \qquad \varphi\underbrace{\left(\osym{bae_Be_A\ldots e_Be_Ae_B}\right)}_{e_{B}\  \text{appears}\  n\text{-times}} = b-a +n+1. 
\end{aligned}
$$
 The inverse of $\varphi$ is the filler of the coproduct diagram in the category of groups and is determined by
$$
\varphi^{-1}: \mathrm{G}(A;e_A)\oplus \mathrm{G}(B;e_B)\oplus \mathbb{Z}\longrightarrow \; \mathrm{G}(A\boxplus B;e_{A}),
$$
$$
0\lto e_{A},\quad 1\lto e_{B},\quad   a\lto a,\quad b\lto \osym{be_Be_A}\, ,
$$
for all $a\in A$ and $b\in B$.
Therefore, since any homomorphism of groups is a homomorphism of heaps, we conclude that $\varphi^{-1}$ is a homomorphism of heaps. Clearly, compositions of $\varphi$ and $\varphi^{-1}$ give  identities so $\varphi$ is an isomorphism of heaps as required.
\end{proof}

Observe that even the coproduct of Abelian heaps is no longer a sub-heap of the product of heaps in contrast to what happens in the categories of groups.

Since the coproduct is an associative operation on a category, the identification of Proposition~\ref{prop:sum-iso} can be iterated and transferred easily to coproducts of any finite (or infinite) number of heaps.  In particular, we obtain

\begin{corollary}\label{cor:sum-free}
Let $X=\{x_{1}\ldots x_{n}\}$ be a finite set. Then
$$
 \Hh(\{x_{1}\})\boxplus\Hh(\{x_{2}\})\boxplus \ldots \boxplus \Hh(\{x_{n}\}) \cong \hH(\mathbb{Z}^{n-1})\cong \Aa(X).
$$
\end{corollary}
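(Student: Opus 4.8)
The plan is to prove the two displayed isomorphisms in turn, both by reduction to facts already established for pairwise coproducts and for free Abelian heaps. The first observation I would make is that each factor is trivial: a one-element set $\{x_i\}$ admits no reduced word of length exceeding one, so $W(\{x_i\})=\{x_i\}$ and $\Hh(\{x_i\})$ is the singleton heap $\star$; equivalently, by Lemma~\ref{lem.heap:group}, $\Hh(\{x_i\})\cong\hH(\Gg(\emptyset))=\hH(0)$. In particular every retract $\mathrm{G}(\Hh(\{x_i\});e)$ is the trivial group.

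Next I would promote Proposition~\ref{prop:sum-iso} to an $n$-fold statement by induction on $n$, using the associativity of the coproduct (valid up to the canonical isomorphism in any category). The claim is that for Abelian heaps $A_1,\dots,A_n$,
$$
\boks{i=1}{n}{A_i}\;\cong\;\hH\left(\bigoplus_{i=1}^{n}\mathrm{G}(A_i;e_i)\oplus\mathbb{Z}^{n-1}\right).
$$
The case $n=1$ is $\hH(\mathrm{G}(A_1;e_1))=A_1$ from \eqref{h-g-h-g}, and $n=2$ is exactly Proposition~\ref{prop:sum-iso}. For the step from $n$ to $n+1$ I would write the coproduct as $\big(\boks{i=1}{n}{A_i}\big)\boxplus A_{n+1}$ and apply Proposition~\ref{prop:sum-iso}; the retract of the first factor is read off from the inductive hypothesis using $\mathrm{G}(\hH(G);e)\cong G$, which is immediate from the definitions \eqref{heap-form-gr}--\eqref{gr-from-heap}, and the single extra copy of $\mathbb{Z}$ contributed by Proposition~\ref{prop:sum-iso} raises the exponent from $n-1$ to $n$.

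Specialising to $A_i=\Hh(\{x_i\})=\star$, every summand $\mathrm{G}(A_i;e_i)$ vanishes and the formula collapses to $\boks{i=1}{n}{\Hh(\{x_i\})}\cong\hH(\mathbb{Z}^{n-1})$, which is the first asserted isomorphism. For the second, I would invoke Remark~\ref{rem.free.ab}: since $|X|=n$, the free Abelian heap $\Aa(X)$ is isomorphic to $\hH$ of the free Abelian group on $X\setminus\{x\}$, and the latter group is $\mathbb{Z}^{n-1}$; hence $\Aa(X)\cong\hH(\mathbb{Z}^{n-1})$, completing the chain.

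I do not anticipate a substantive obstacle; the only care needed is in the bookkeeping -- verifying that combining $n$ heaps requires $n-1$ pairwise coproducts and therefore accrues exactly $n-1$ copies of $\mathbb{Z}$ (not $n$), and that passing the inductive hypothesis through Proposition~\ref{prop:sum-iso} indeed uses the retract identity $\mathrm{G}(\hH(G);e)\cong G$ rather than merely the isomorphism of all retracts of a fixed heap. As an alternative one could bypass the induction altogether: the free Abelian heap functor is a left adjoint and hence preserves coproducts, so $\boks{i=1}{n}{\Hh(\{x_i\})}=\boks{i=1}{n}{\Aa(\{x_i\})}\cong\Aa\!\left(\bigsqcup_{i=1}^n\{x_i\}\right)=\Aa(X)$, after which Remark~\ref{rem.free.ab} again yields $\hH(\mathbb{Z}^{n-1})$; but the computational route stays closer to the machinery developed in this section.
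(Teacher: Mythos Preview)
Your proposal is correct and follows essentially the same approach as the paper: observe that each $\Hh(\{x_i\})$ is a singleton with trivial retract, iterate Proposition~\ref{prop:sum-iso} to obtain $\hH(\mathbb{Z}^{n-1})$, and invoke Remark~\ref{rem.free.ab} for the second isomorphism. The only difference is that the paper leaves the iteration of Proposition~\ref{prop:sum-iso} implicit, whereas you spell out the induction explicitly (and even offer the left-adjoint shortcut as an alternative); both are entirely in the spirit of the paper's argument.
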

\begin{proof}
The free heap on a singleton set is the singleton set itself, and thus the associated (Abelian) group is the trivial group $0$. The first isomorphism thus follows from Proposition~\ref{prop:sum-iso}. The second isomorphism follows by Remark~\ref{rem.free.ab}.
\end{proof}

\begin{example}\label{ex.sum}
Let us take heaps $A=\{0_{A},1_{A}\}$ and $B=\{0_{B},1_{B}\}$ each associated with the group $C_{2}$, and choose  $0_{A}$ and $0_{B}$ as distinguished  elements of statement (2) in Proposition~\ref{prop.sum}. Proposition~\ref{prop:sum-iso} implies that $A \boxplus B\cong H(C_{2}\oplus C_{2}\oplus \mathbb{Z})$. Moreover, by choosing $G(A\boxplus B;0_{A})$ and looking at the elements from Proposition~\ref{prop.sum} we can deduce that tails of the form $0_{B}0_{A}\ldots 0_{A}0_{B}$ and $0_{A}0_{B}\ldots 0_{B}0_{A}$ represent numbers of $\mathbb{Z}$ in the direct sum.
\end{example}

Let $(A_x)_{x\in X}$ be a family of left modules over a truss $T$. By the distributivity of action, for each $t\in T$ and $X$ the function 
$
\lambda^t_x : A_{x} \lra \underset{x\in X}{\boxplus} A_x$, $a\lto t\cdot a,
$
is a homomorphism of heaps. For each $t\in T$, the family $(\lambda_x^t)_{x\in X}$ extends to the homomorphism of heaps $\underset{x\in X}{\boxplus}\lambda_x^t : \underset{x\in X}{\boxplus} A_x\lra \underset{x\in X}{\boxplus} A_x$, and thus there is a $T$-action
$$
T\times \underset{x\in X}{\boxplus} A_x\lto \underset{x\in X}{\boxplus} A_x, \qquad (t,a)\mapsto \underset{x\in X}{\boxplus}\lambda_x^t(t,a),
$$
which makes $\underset{x\in X}{\boxplus} A_x$ into a $T$-module. This action is defined letter-by-letter, so for example in the case of a two-element family of $T$-modules $A$ and $B$,
$$
t\cdot \osym{a_1b_1a_2\ldots a_{k}b_ka_{k+1}} =  \osym{(t\cdot a_1)(t\cdot b_1) (t\cdot a_2) \ldots (t\cdot a_{k})(t\cdot b_k) (t\cdot a_{k+1})},
$$
where $t\in T$, $a_i\in A$ and $b_i\in B$, etc.

For $T$-modules $A,B$ we can explicitly write out what the module action looks like on $\mathrm{G}(A;e_A)\oplus \mathrm{G}(B;e_B)\oplus \mathbb{Z}$, by transferring it through the isomorphism $\varphi$ in Proposition~\ref{prop:sum-iso}. The action  is given by the formula $t\la \varphi(x)=\varphi(t\cdot x)$, $x\in A\boxplus B$, and, for all $a\in A$, $b\in B$ and $n\in \ZZ$, it comes out as
\begin{equation}\label{action.group}
t\la (a+b+n) = t\cdot a -n(t\cdot e_A) +t\cdot b +(n-1)(t\cdot e_B) +n,
\end{equation}
where the use of the additive notation tacitly presupposes that $e_A=e_B=0$ in the direct sum of Abelian groups $\mathrm{G}(A;e_A)\oplus \mathrm{G}(B;e_B)\oplus \mathbb{Z}$.
In particular, in the case that both $t\cdot e_{B}=e_{B}$ and $t\cdot e_{A}=e_{A}$ the action takes the simple form $t\la (a,b,n)=(t\cdot a,t\cdot b,n)$.

\begin{proposition}\label{prop.unitz}
Let $T$ be a truss and $I$ be the truss on a singleton set $\{1\}$. Then $T\boxplus I$ with multiplication $\cdot$ given by
\begin{equation}\label{u.t}
1\cdot t=t\cdot 1=t \text{\ and\ }t\cdot t'=tt',
\end{equation}
where $t,t'\in T$ and $tt'$ is multiplication in  $T$ is a unital truss, which we term the {\em unital extension of $T$} and denote by $T_1$.
\end{proposition}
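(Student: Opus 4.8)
The plan is to construct the multiplication abstractly, via the endomorphism truss together with the universal property of the coproduct, rather than wrestling with the explicit words of Proposition~\ref{prop.sum}. Write $M=T\boxplus I$ and recall that $E(M)=\ahrd(M,M)$ is itself a unital truss: its heap operation is computed pointwise as in \eqref{mor.heap}, its multiplication is composition, and its identity is $\id_M$. The elementary fact I will lean on repeatedly is that, because the heap structure on $E(M)$ is pointwise, the evaluation maps $\ev_y\colon E(M)\to M$, $f\mapsto f(y)$, as well as pre- and post-composition by any fixed heap endomorphism, are all morphisms of Abelian heaps. Combined with the universal property of $M=T\boxplus I$ (Proposition~\ref{prop.sum.uni}), this allows every identity below to be verified just on the generators $T$ and $1$.

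First I would build, for each $t\in T$, a heap endomorphism $L_t\in E(M)$ representing left multiplication by $t$. By the universal property I set $L_t(t')=tt'$ on $T$ (a heap morphism $T\to T\hookrightarrow M$ by left distributivity in $T$) and $L_t(1)=t$ on $I$. Next I assemble these: the assignment $t\mapsto L_t$ is a heap morphism $T\to E(M)$, since $L_{[t,t',t'']}=[L_t,L_{t'},L_{t''}]$ can be checked on the generators of $M$ using right distributivity in $T$; together with $1\mapsto\id_M$ on $I$ the universal property produces a heap morphism $\Lambda\colon M\to E(M)$. The multiplication on $M$ is then defined by $x\cdot y:=\Lambda(x)(y)$, and by construction it restricts to the multiplication of $T$ and satisfies the relations \eqref{u.t}.

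With this definition the distributive laws are automatic: each $\Lambda(x)$ is a heap morphism, giving $x\cdot[y,y',y'']=[x\cdot y,x\cdot y',x\cdot y'']$, while $\Lambda$ lands in the pointwise heap $E(M)$, giving $[x,x',x'']\cdot y=[x\cdot y,x'\cdot y,x''\cdot y]$. Left unitality $1\cdot y=y$ is immediate from $\Lambda(1)=\id_M$, and right unitality $x\cdot 1=x$ follows because $x\mapsto\ev_1(\Lambda(x))$ is a heap endomorphism of $M$ that agrees with $\id_M$ on the generators $T$ and $1$, hence equals $\id_M$.

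The real content, and the step I expect to be the main obstacle, is associativity, which is equivalent to $\Lambda(x\cdot y)=\Lambda(x)\circ\Lambda(y)$ for all $x,y\in M$. I would prove this by a nested descent to generators. For fixed $x$, both $y\mapsto\Lambda(x\cdot y)$ and $y\mapsto\Lambda(x)\circ\Lambda(y)$ are heap morphisms $M\to E(M)$ (the former is $\Lambda$ precomposed with $\Lambda(x)$ and an evaluation; the latter is $\Lambda$ followed by $f\mapsto\Lambda(x)\circ f$), so the identity for all $y$ reduces to $y=1$ (immediate) and $y=t'\in T$. The case $y=t'$ must still hold for every $x$; treating $t'$ as fixed, both $x\mapsto\Lambda(x\cdot t')$ and $x\mapsto\Lambda(x)\circ\Lambda(t')$ are heap morphisms in $x$ (the latter being $\Lambda$ followed by $f\mapsto f\circ\Lambda(t')$), reducing further to $x=1$ (immediate) and $x=t\in T$. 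The surviving identity $\Lambda(tt')=\Lambda(t)\circ\Lambda(t')$ is an equality of heap endomorphisms of $M$, which I check on generators: on $t''\in T$ it is exactly the associativity $(tt')t''=t(t't'')$ of $T$, and on $1$ both sides equal $tt'$. This threefold reduction to the generator level is where all the work lies; once it is complete, the truss axioms for $M$ hold and $1$ is a two-sided identity, so $M=T_1$ is a unital truss.
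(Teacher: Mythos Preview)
Your proof is correct and takes a genuinely different route from the paper's. The paper constructs, for each $s\in T\boxplus I$, a \emph{right} multiplication map $\lambda^s_T\colon T\to T\boxplus I$ given explicitly on the normal-form words of Proposition~\ref{prop.sum}, verifies by hand that this is independent of the chosen representative, and then applies the universal property once to obtain $\lambda^s\colon T\boxplus I\to T\boxplus I$; associativity is then dismissed with the remark that a distributive operation associative on generators is associative everywhere. By contrast, you parametrise by the \emph{left} argument, route everything through the endomorphism truss $E(M)$, and apply the universal property twice (once to build each $L_t$, once to assemble them into $\Lambda$), never touching the explicit word description. What you gain is that well-definedness is automatic---no representative-independence check is needed---and the whole argument is uniform: distributivity, unitality, and associativity all reduce to checks on generators via the single principle that two heap morphisms out of a coproduct agreeing on the summands coincide. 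What the paper's approach buys is an explicit formula for the product of arbitrary elements in terms of the normal forms, which your argument does not directly provide.
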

\begin{proof}
First note that if a binary operation defined on the heap $H$ generated by a set $X$ is associative on elements of $X$ and distributes over the heap operation, then it is associative on the whole of $H$. The operation \eqref{u.t} is associative on $T\sqcup\{1\}$ and hence it is associative on all the generators of the heap $T\boxplus I$. We need to show that this operation as defined in \eqref{u.t} can be extended to the whole of $T\boxplus I$ as a distributive operation. 
To this end, for all $s \in T\boxplus I$ consider two functions extending multiplication \eqref{u.t} to elements of $T\boxplus I$ term-by-term, i.e.\
$$
\begin{aligned}
\lambda^s_T & : T\lra T\boxplus I, \\ 
& t \lto t\cdot s := 
\begin{cases} 
\mh{t\cdot s_1, t\cdot 1, \ldots, t\cdot 1, t\cdot s_n} & \\
 ~\;\;\;\;\;\;\;\;\;\;\;= \mh{ts_1, t, \ldots, t, ts_n}, & \mbox{if $s=\mh{s_1, 1, s_2,\ldots, 1, s_n}$},\\
\mh{t\cdot 1, t\cdot s_1, \ldots,  t\cdot s_n, t\cdot 1} & \\
~\;\;\;\;\;\;\;\;\;\;\;= \mh{ t, ts_1,\ldots, ts_n, t}, & \mbox{if $s=\mh{1, s_1, 1,\ldots,  s_n, 1}$},\\
[t\cdot s_1, t\cdot s_2, t\cdot 1] = [t s_1, t s_2, t], & \mbox{if $s=[s_1, s_2,1]$},
\end{cases}
\end{aligned}
$$
where $s_i\in T$, 
and 
$$
\lambda^s_I : I\lra T\boxplus I, \qquad 1\mapsto 1\cdot s = s.
$$
The latter of these functions is a well-defined homomorphism of heaps, for all $s\in T\boxplus I$. To see that the former is so as well we first establish that its definition is independent on the presentation of $s$. If
$$
\mh{s_1, 1, \ldots, 1, s_n}=\mh{s'_1, 1, \ldots, 1, s'_n},
$$
then the Mal'cev identities imply that
$$
s'_1 = \mh{s_1, 1, \ldots, 1, s_n, s'_n,1,\ldots, 1, s'_2,1},
$$
Using the fact that $T\boxplus I$ is an Abelian heap and Mal'cev identities again, all the $1$ can be eliminated and one finds that
$$
s'_1 = \mh{s_1, s'_2, s_2,s'_3 \ldots, s_{n-1},s'_n , s_n}.
$$
Therefore,
$$
\begin{aligned}
~\mh{ts'_1, t, \ldots, t, ts'_n} &= \mh{t\mh{s_1, s'_2, s_2,s'_3 \ldots, s_{n-1},s'_n , s_n}, t, \ldots, t, ts'_n}\\
&= \mh{ts_1, ts'_2, ts_2,ts'_3 \ldots, ts_{n-1},ts'_n , ts_n, t, \ldots, t, ts'_n}\\
&= \mh{ts_1, t, \ldots, t, ts_n},
\end{aligned}
$$
by the distributive law in $T$, the Mal'cev identities and the fact that $T$ is an Abelian heap. In the second case one notices that  $\mh{1,s_1,\ldots, s_n,1} = \mh{1,s'_1,\ldots, s'_n,1}$ if and only if  $\mh{s_1,1,\ldots, 1, s_n} = \mh{s'_1,1,\ldots, 1, s'_n}$ and thus the same arguments apply. In the third case, if $[s_1, s_2,1]=[s'_1, s'_2,1]$, then $s'_1 = [s_1,s_2,s'_2]$ and again the distributive law and the Abelian heap properties imply the independence of the definition of $\lambda_T^s$ on the representation of $s$. Thus $\lambda^s_T$ is a well-defined function that is a heap morphism by the distributive law in $T$. The universal property of coproducts provides us with the unique fillers (in the category of heaps) in the following diagrams that can be considered for all $s\in T\boxplus I$:
$$
\xymatrix{&& T\boxplus I && \cr T \ar[rr]\ar[urr]^{{\lambda^s_{T}}} && T\boxplus I \ar@{-->}[u]^{\lambda^s}&& I\ar[ll]\ar[ull]_{{\lambda_{I}^s}} .}
$$
In this way the map
$$
\mu: (T\boxplus I)\times (T\boxplus I) \lra T\boxplus I, \qquad (s',s)\lto \lambda^s(s'),
$$
which extends the multiplication \eqref{u.t} to the whole of $T\boxplus I$ has been constructed. This map is a heap homomorphism in both arguments (in the first argument by the universal construction described above, in the second one by the definition of $\lambda^s_T$ and $\lambda^s_I$), that is it distributes over the heap operation in $T\boxplus I$ . This completes the proof.
\end{proof}

Note that if $T$ is a ring-type truss with absorber 0, then  its unital extension $T_1$ remains to be a ring-type truss with (the same) absorber 0. 

\begin{lemma}\label{lem.zero}
Let $T$ be a truss and let $Z$ be the truss on the singleton set $\{0\}$.  Then $T\boxplus Z $ with multiplication $\cdot$ given by
$$
0\cdot t=t\cdot 0=0 \text{\ and\ }t\cdot t'=tt',
$$
where $t,t'\in T$ and $tt'$ is multiplication in truss $T,$ is a ring-type truss, which we term the {\em ring extension of $T$} and denote by $T_0$.
\end{lemma}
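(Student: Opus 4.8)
The plan is to follow the proof of Proposition~\ref{prop.unitz} almost verbatim, since $T\boxplus Z$ and $T\boxplus I$ differ only in the multiplication rule imposed on the adjoined generator. First I would invoke the same general principle recorded at the start of that proof: a binary operation on a heap generated by a set $X$ that is associative on $X$ and distributes over the ternary operation is automatically associative on the whole heap. The rule $0\cdot t=t\cdot 0=0$, $t\cdot t'=tt'$ is manifestly associative on the generating set $T\sqcup\{0\}$ (every product involving a $0$ equals $0$, and on $T$ it is the associative multiplication of $T$), so it remains only to extend it to a distributive operation on all of $T\boxplus Z$.

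For the extension I would reproduce the universal-property construction of Proposition~\ref{prop.unitz}. For each $s\in T\boxplus Z$ define term-by-term maps $\lambda^s_T\colon T\to T\boxplus Z$ and $\lambda^s_Z\colon Z\to T\boxplus Z$ by $\lambda^s_T(t)=t\cdot s$ and $\lambda^s_Z(0)=0\cdot s$, the products being computed term by term as prescribed in the statement. The only difference from the unital case is bookkeeping: since $t\cdot 0=0$, applying $t$ to a word $\mh{s_1,0,\ldots,0,s_n}$ returns $\mh{ts_1,0,\ldots,0,ts_n}$, so the interspersed generators remain $0$ rather than becoming $t$; and since $0\cdot s=0$ for every $s$, the map $\lambda^s_Z$ is constant at $0$ and hence trivially a heap homomorphism. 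The heart of the matter is the well-definedness of $\lambda^s_T$, that is, its independence of the chosen presentation of $s$; this is the same computation as in Proposition~\ref{prop.unitz} --- reduce the heap-equality of two alternating presentations to a relation among the $T$-entries using the Mal'cev identities and the structure of the coproduct recorded in Proposition~\ref{prop.sum}, then apply $t$ and use distributivity in $T$ --- and if anything it is shorter here because the $0$'s are absorbed. The universal property of the coproduct then supplies a unique filler $\lambda^s$, and $\mu(s',s):=\lambda^s(s')$ is a heap homomorphism in each argument, giving a distributive and (by the first paragraph) associative multiplication.

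Finally I would check that the image of the generator $0\in Z$ is a two-sided absorber for all of $T\boxplus Z$, which is precisely the extra content of the ``ring-type'' conclusion. Taking $s=0$, the maps $\lambda^0_T$ and $\lambda^0_Z$ send every generator to $0$ (because $t\cdot 0=0=0\cdot 0$), so by uniqueness $\lambda^0$ is constant at $0$ and thus $s'\cdot 0=0$ for all $s'\in T\boxplus Z$; symmetrically, $\lambda^s_Z(0)=0\cdot s=0$ for every $s$ gives $0\cdot s=0$ for all $s$. Hence $0$ absorbs on both sides and $T_0:=T\boxplus Z$ is a ring-type truss. I expect no genuine obstacle beyond transcribing the well-definedness computation from Proposition~\ref{prop.unitz}; the one point deserving an explicit line is the absorber property, as it is the only genuinely new assertion and is exactly where the choice $t\cdot 0=0$ (rather than $t\cdot 1=t$) is used.
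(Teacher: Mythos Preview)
Your proposal is correct and follows essentially the same route as the paper: mimic Proposition~\ref{prop.unitz} to extend the multiplication by the coproduct's universal property, then note that $0$ is an absorber. The one refinement the paper adds is that the well-definedness of $\lambda^s_T$ need not be rechecked by hand: since both $T$ and $Z=\{0\}$ are left $T$-modules (the latter trivially), the map $t\mapsto t\cdot s$ is precisely the $T$-action on the coproduct $T\boxplus Z$ already constructed just before Proposition~\ref{prop.unitz}, so its well-definedness comes for free.
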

\begin{proof}
The proof is analogous to that of Proposition~\ref{prop.unitz}. We only note in passing that the maps $\lambda^s_T$ are well-defined since both $Z$ and $T$ are left $T$-modules, and $\lambda^s_T$ is the action of $T$ on the direct sum of its modules.
\end{proof}

The construction in Proposition~\ref{prop.unitz} may be followed by that of Lemma~\ref{lem.zero} thus extending any truss $T$ to the unital ring-type truss $T\boxplus \{1\}\boxplus \{0\}$ (or a unital ring with the retract of the heap $T\boxplus \{1\}$ by 0 as the 
additive group). Note that any ring extension of a non-empty truss is an infinite ring, so while any ring can be interpreted as a truss, only some (and necessarily infinite at that) rings can be obtained as extensions of trusses. In particular one easily finds that $\mathrm{G}(\star\boxplus \{0\};0)$ together with the multiplication of the ring extension of $\star$ is equal to the ring of integers. Presently, we describe other examples of unital and ring extensions of trusses.

\begin{example}\label{ex.z2.c}
Let us consider the ring $\ZZ_2=\{i_{0},i_{1}\}$, where $i_0$ is the zero and $i_1$ is the identity, and the associated truss $\mathrm{T}(\ZZ_2).$  In view of the isomorphism $\varphi$ in the proof of Proposition~\ref{prop:sum-iso}, the extension of $\mathrm{T}(\ZZ_2)$ by an absorber  is
$$
\gG(\mathrm{T}(\ZZ_2)\boxplus \{0\};0):=  \{\sigma u+k i_{0}\; |\; k\in \mathbb{Z},\sigma \in \mathbb{Z}_{2}\},
$$
where $u=[i_1,i_0,0],$ $-+-=[-,0,-]$ and the appearance of $\sigma$ implies the presence or absence of $u.$ The formulae for addition and multiplication come out as:
$$
 \begin{aligned}
 (\sigma u+ki_{0})+(\sigma' u+ k'i_{0})&=(\sigma +_{(\bmod{2})}\sigma')u+  (k+k')i_{0},\\
(\sigma u+ki_{0})\cdot(\sigma' u+k'i_{0})&=\sigma\sigma'u +kk'i_0.
\end{aligned}
$$
Since $\mathrm{T}(\ZZ_2)$ is a truss with identity, so is its extension $\mathrm{T}(\ZZ_2)_0$; the identity is $i_1=u+i_0$.

\end{example}
\begin{example}\label{ex.z.c}
Let us consider the truss on the heap associated with $\mathbb{Z}$, whose multiplication is given by a constant $c$, i.e.\ $mn=c$ for all $m,n\in \mathbb{Z}.$ We  denote this truss as $\mathbb{Z}^{c}$ and describe the ring extension of $\mathbb{Z}^{c}$. To distinguish elements of $\ZZ$ from the integer multiplicities, we will use the symbols $i_m$, $m\in \ZZ$ for elements of $\mathbb{Z}^{c}$. In other words,
$$
\ZZ^c = \{i_m\; |\; m\in \ZZ\}, \qquad [i_k,i_l,i_m] = i_{k-l+m}, \quad i_mi_n = i_c.
$$
 By Proposition~\ref{prop:sum-iso}, the heap underlying $\mathbb{Z}^{c}_0 = \{0\} \boxplus \mathbb{Z}^{c}$ is isomorphic to $\hH(\mathbb{Z}\oplus \mathbb{Z})$. Following Proposition~\ref{prop.sum} we choose $0$ and $i_c\in \ZZ^c$ as special elements $e_A$ and $e_B$, respectively, and look at the retract $\mathrm{G} (\{0\} \boxplus \mathbb{Z}^{c};0)$ as the Abelian group underlying the ring $\mathbb{Z}^{c}_0$. In view of the isomorphism $\varphi$ in the proof of Proposition~\ref{prop:sum-iso},
 $$
 \mathrm{G} (\{0\} \boxplus \mathbb{Z}^{c};0) = \{\sigma i_{n}+k i_{c}\; |\; n\in\mathbb{Z}\setminus\{ c\},k\in \mathbb{Z},\sigma \in \mathbb{Z}_{2}\},
 $$
  where $-+-=[-,{0},-]$. The appearance of $\sigma$ simply indicates either the absence or presence of $i_n$. The formulae for addition and multiplication in the ring $(\mathrm{G} (\{0\} \boxplus \mathbb{Z}^{c};0), \cdot)$ come out as:
 $$
 \begin{aligned}
 (\sigma i_{n}+ki_{c})+(\sigma' i_{n'}+ k'i_{c})&=\sigma\sigma'(i_{n-c +n'} + i_{c})+ (1-\sigma')\sigma i_{n}\\
 &~\qquad \qquad +(1-\sigma)\sigma'i_{n'}+ (k+k')i_{c},\\
(\sigma i_{n}+ki_{c})\cdot(\sigma' i_{n'}+k'i_{c})&=(\sigma\sigma'+\sigma k'+\sigma' k+kk')i_{c}. 
\end{aligned}
$$

The ring extension of $\ZZ^c$ can be extended further to make it into a unital truss, $\ZZ^c_{0,1}= \{{0}\}\boxplus\mathbb{Z}^{c}\boxplus\{1\}$ as in Proposition~\ref{prop.unitz}. The corresponding retract is
$$
\mathrm{G}(\ZZ^c_{0,1};0) =  \{\sigma i_{n}+k i_{c}+l 1\; |\; n\in\mathbb{Z}\setminus\{ c\},k,l\in \mathbb{Z},\sigma \in \mathbb{Z}_{2}\}.
$$ 
The binary operations are as follows
$$
\begin{aligned}
(\sigma i_{n}+k i_{c}+ l 1)+(\sigma' i_{n'}+k' i_{c}+l' 1)& =\sigma \sigma'(i_{n-c-n']}+i_{c})+(1-\sigma')\sigma i_{n}\\ &+(1-\sigma )\sigma'i_{n'}+(k+k)'i_{c}+(l+l') 1,\\
(\sigma  i_{n}+k i_{c}+ l 1)\cdot(\sigma' i_{n'}+ k' i_{c}+ l' 1)& =(\sigma \sigma'+\sigma k'+\sigma' k+kk'+kl'+lk')i_{c}\\ &+ \sigma l'i_{n}+ \sigma'li_{n'}+ ll' 1.
\end{aligned}
$$ 
The retract $\mathrm{G}(\ZZ^c_{0,1};0)$ with multiplication $\cdot$  is a unital ring.
\end{example}

\begin{example}
Let us consider the cyclic group $C_{2}=\{a,b\}$, where $a$ is the neutral element,  with multiplication given by addition i.e.\ $a\cdot b=a+b=b$,  etc. One can observe that $C_{2}$ with such operations is a brace, and so there is the associated truss, which we denote by $\mathrm{T}(C_{2})$. This can be extended to $\mathrm{T}(C_{2})_0 = \{0\}\boxplus \mathrm{T}(C_{2})$ as in Lemma~\ref{lem.zero}. We choose $0$ and $a$ as distinguished elements and, as in the preceding example,  we study the ring structure on the retract $\mathrm{G}(\mathrm{T}(C_{2})_0;0)$. Note that  $[b,a,b] =a$ in $\mathrm{T}(C_{2})$ yields the following relation in $\mathrm{G}(\mathrm{T}(C_{2})_0;0)$,
$$
b+b = [b,0,b]= \mh{b,0,b, a,a} = [[b,a,b],0,a] = [a,0,a] = a+a.
$$
Taking this  into account we set 
$
t = [b,a,0] \in  \mathrm{T}(C_{2})_0, 
$
and  find that
$$
\mathrm{G}(\mathrm{T}(C_{2})_0;0)=\{\sigma t+n{a}\;|\;\sigma\in \mathbb{Z}_{2},n\in \mathbb{Z} \}.
$$ 
The addition and multiplication in the ring $\mathrm{G}(\mathrm{T}(C_{2})_0;0)$ come out as follows:
$$
\begin{aligned}
(\sigma  t+n{a})+(\sigma' t+n'{a}) &=(\sigma +_{(\bmod{2})}\sigma'){t}+(n+n'){a},\\
(\sigma  t+n{a})\cdot(\sigma' t+n'{a})&=\frac{1-(-1)^{\sigma'n+\sigma n'}}{2}{t}+nn'{a}.
\end{aligned}
$$
We note in passing that since $a$ is the multiplicative identity of the brace $C_{2}$,  the ring $\mathrm{G}(\mathrm{T}(C_{2})_0;0)$ also has identity $a$.
\end{example}

A few comments appear to be in order now. Examples~\ref{ex.z2.c} \& ~\ref{ex.z.c} illustrate the fact that if a truss $T$ had an absorber, making the ring extension $T_0$ does not increase the number of absorbers (this would contradict the uniqueness of absorbers), but replaces the existing absorber by a new one. The truss  $\mathbb{Z}^{c}$ has absorber $i_c$ which ceases to be an absorber in $\mathbb{Z}^{c}_0$ as $i_c(\sigma i_n +ki_c) = (\sigma +k)i_c$. 
Similar comment can be made about the unital extension: if a unital truss $T$, with identity $u$, is extended to $T_1$, then $u$ ceases to be the identity in $T_1$, as $1u=u1=u$ by the definition of the multiplication in $T_1$.  One can also notice that  the unital extension of the truss generated by a brace is no longer a truss generated by a brace (the fact that the ring extension is not a truss associated to a brace is obvious, since 0 is never an invertible element of a non-trivial ring).  The easiest example is adding identity to the truss $\star$ associated to the trivial brace $\{0\}$; $\star_1$ is a ringable truss which as a ring can be identified with $\ZZ$. Conceptually this can be understood by observing that the results of multiplication of any element from the truss associated with a brace $B$ and an element from the unital extension that does not belong to $B$ is an element of $B$ so there are no inverses in $\mathrm{T}(B)_1$ to elements in $B$.

Finally, let us observe that the ring obtained from the unital extension of the truss $\mathrm{T}(R)$ associated to a ring $R$ is the same as  the  Dorroh extension of $R$ \cite{Dor:con}.
Indeed, we know that
$\mathrm{T}(R)_1= \mathrm{T}(R)\boxplus\{1\}\cong \mathrm{H}(R\oplus\mathbb{Z})$, we can choose $0\in R$ and $1$ to be  distinguished elements and study the ring structure on the retract 
$$
\mathrm{G}(\mathrm{T}(R)_1;0) = \{r+n\;|\; r\in R, n\in \ZZ\} = R\oplus \ZZ.
$$
 Since $0$ is an absorber in $\mathrm{T}(R)$ it remains an absorber in the unital truss $\mathrm{T}(R)_1$ and we can write down the multiplication formula as
$$
\begin{aligned}(r+n)\cdot (r'+n')& =[r,0,n]\cdot[r',0,n']=[r[r',0,n'],0[r',0,n'],n[r',0,n']]\\  
&=\mh{rr',0,rn',0,nr',0,nn'}=rr'+rn'+nr'+nn'.
\end{aligned}
$$
This is precisely the multiplication rule for the  Dorroh extension of the ring $R$.


\section{Free modules}\label{sec.free.mod}
The aim of this section is to construct free modules over a unital truss and in the case of the truss associated to a ring study their relationship to modules over that ring. We start by introducing a candidate for a free module. Throughout this section $T$ is a unital truss.

Let $X$ be a set.  For every $x\in X,$ let us define the unital left $T$-module 
$$
Tx:=\{tx\;|\;t\in T\}, \qquad [tx,t'x,t''x] := [t,t',t'']x, \quad t\cdot (t'x) = (tt')x,
$$ 
i.e.\ $Tx$ is a $T$-module generated by $x$ that is obviously isomorphic to $T$ (as a left module). By convention $1x$ is identified with $x$, so that we may view $x$ as an element of $Tx$.
Now we can consider the direct sum module 
$$
\Tt^{X}:=\underset{x\in X}{\boxplus}{Tx}.
$$
 From Proposition~\ref{prop.sum}  and the construction of the coproduct of modules we observe that every element of $\Tt^{X}$ can be written as 
$$
\mh{t_{1}x_{1},\ldots,t_{n}x_{n},k_{i_1}x_{i_1},\ldots,k_{i_{n'}}x_{i_{n'}}}
$$
where $\{i_1,\ldots,i_{n'}\}\subseteq \{1,\ldots,n\},$ $t_{j}\in T,$ $x_{j}\in X$ and $k_{j}x_{j}=\mh{\underbrace{x_{j},e,x_{j},e,\ldots,x_{j}}_{x_{j}-appears\  k_{j}\  times}}$,   for any fixed  $e\in X$.
Moreover one can observe that there are isomorphisms of heaps 
$$
\begin{aligned}
\Tt^{X}&\cong \mathrm{H}\left(\mathrm{G}\left(Te;e\right)\oplus\left(\bigoplus_{x\in X\setminus\{e\}}\left(\mathrm{G}\left(Tx;x\right)\oplus \mathrm{G}\left(\Hh(\{ x \});x\right)\right)\right)\right)\\
&\cong \mathrm{H}\left(\mathrm{G}\left(Te;e\right)\oplus\left(\bigoplus_{x\in X\setminus\{e\}}\left(\mathrm{G}\left(Tx;x\right)\oplus \ZZ\right)\right)\right), 
\end{aligned}
$$
analogous to those found in Proposition~\ref{prop:sum-iso} and Corollary~\ref{cor:sum-free}. Although the $T$-module structure of $\Tt^X$ can be transferred to the right hand side through this isomorphism, the form of the transferred action interacts nontrivially and often in a not necessarily illuminating manner with the direct sum of groups (compare \eqref{action.group} in the case of two modules). 

Following the categorical idea of freeness (see e.g.\ the universal property in Lemma~\ref{lem.free.heap}) let us fix set $X$ and consider inclusion $\iota_{X}:X\to \Tt^X$, given by $\iota_{X}(x)=1x$, for all $x\in X$.  Then, for any unital $T$-module $N$ and any function $\varphi: X\lra N$ we obtain the following commutative diagram  
$$
\xymatrix{X \ar[rr]^-{\iota_X} \ar[dr]_-\varphi&& \Tt^{X} \ar@{-->}[dl]^-{\exists !\, \wphi }
\\
& N, &},
$$
where the unique $T$-module morphism $\wphi$ is defined by
$$
\wphi: \Tt^{X}\lra N, 
$$
$$
\begin{aligned}
\mh{t_{1}x_{1},\ldots,t_{n}x_{n},k_{i_1}x_{i_1},\ldots,k_{i_{n'}}&x_{i_{n'}}}\\
&\lto \mh{t_{1}\varphi(x_{1}),\ldots,t_{n}\varphi(x_{n}),k_{i_1}\varphi(x_{i_1}),\ldots,k_{i_{n'}}\varphi(x_{i_{n'}})},
\end{aligned}
$$
where $k_{j}\varphi(x_{j})=\mh{\varphi(x_{j}),\varphi(e),\varphi(x_{j}),\varphi(e),\ldots,\varphi(x_{j})}.$
Since this is the 
universal property characterising  a free object in the category of unital modules over $T$,  $\Tt^{X}$ is the free unital $T$-module on $X$, as expected.

Following the usual ring-theoretic conventions we can formulate
\begin{definition}\label{def.gen.mod}
A unital $T$-module is said to be {\em generated by a set $X$}, if there exists a $T$-module epimorphism $\Tt^X\lra M$. It is said to be {\em finitely generated} if there exists finite $X$ that generates $M$. $M$ is a (finitely generated) {\em free} $T$-module if it is isomorphic to $\Tt^X$, for some (finite) $X$.
\end{definition}

As in the case of modules over a ring, one can try to characterise free modules by the existence of a basis.

\begin{definition}\label{def.lin.ind}
Let $M$ be a left $T$-module and let $X$ be a non-empty subset of $M$. Let, for all $x\in X$, $\sigma_x$ denote  the left $T$-module homomorphism
$$
 \sigma_x : T \lra M, \qquad t\lto tx.
 $$
 \begin{zlist}
 \item We say that the set $X$ is {\em free} if, for all finite subsets $S$ of $X$, the map
 $\underset{x\in S}{\boxplus}\sigma_{x}$ is a monomorphism.
 \item A free set  $B$ is said to be a {\em basis} for $M$ if the map  $\underset{x\in B}{\boxplus}\sigma_{x}$ is an epimorphism.
 \end{zlist}
 \end{definition}
 
 \begin{lemma}\label{lem.intersec}
 If $X$ is a free subset of $M$, then, for all finite non-empty proper subsets $Y$ of $X$ and all $x\in X\setminus Y$,
 $$
 \sigma_x(T) \cap \left(\underset{y\in Y}{\boxplus}\sigma_{y}\right)(\boks{Y}{}{T}) = \emptyset.
 $$
 \end{lemma}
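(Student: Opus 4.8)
The plan is to argue by contradiction, reducing the asserted disjointness to the injectivity of the structure map on the finite sub-coproduct indexed by $S := Y\cup\{x\}$, and then to contradict that injectivity using the explicit inventory of coproduct elements from Proposition~\ref{prop.sum}. Conceptually, the point is that in the affine/heap setting there is no distinguished zero, so freeness does not merely pin the intersection down to a single common element (as it would for modules over a ring via $Rx\cap\sum_{y}Ry=0$) but forces it to be genuinely empty.

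First I would suppose the intersection is non-empty, so that there is an element $z\in M$ with $z=\sigma_x(s)=sx$ for some $s\in T$ and, simultaneously, $z=\left(\boxplus_{y\in Y}\sigma_y\right)(w)$ for some $w\in\boks{Y}{}{T}$. Since $Y$ is finite and $x\notin Y$, the set $S=Y\cup\{x\}$ is a finite subset of $X$, so freeness of $X$ guarantees that $f:=\boxplus_{w\in S}\sigma_w:\boks{S}{}{T}\lra M$ is a monomorphism.

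Next I would factor the two descriptions of $z$ through $f$ using the universal property of the coproduct. Writing $\iota_x:T\lra\boks{S}{}{T}$ for the inclusion of the copy of $T$ indexed by $x$, and $\iota_Y:\boks{Y}{}{T}\lra\boks{S}{}{T}$ for the canonical map induced by the inclusions of the copies indexed by $y\in Y$, the defining property of the induced map gives $\sigma_x=f\circ\iota_x$ and $\boxplus_{y\in Y}\sigma_y=f\circ\iota_Y$. Hence $f(\iota_x(s))=z=f(\iota_Y(w))$, and injectivity of $f$ forces the equality $\iota_x(s)=\iota_Y(w)$ inside $\boks{S}{}{T}$.

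Finally I would contradict this equality from the explicit form of coproduct elements. By Proposition~\ref{prop.sum}(1)(a) the element $\iota_x(s)$ is a single-letter word lying in the copy of $T$ indexed by $x$, whereas $\iota_Y(w)$ is represented by a symmetric reduced word all of whose letters come from the summands indexed by $Y$ (by Proposition~\ref{prop.sum} and its multi-summand extension). Since $x\notin Y$, these two reduced words share no letters, and a single letter of the $x$-copy can never be identified with a word supported entirely on the complementary $Y$-summands; thus $\iota_x(s)\neq\iota_Y(w)$, the desired contradiction. I expect this last step to be the main obstacle, since it is precisely here that one must invoke the fact from Proposition~\ref{prop.sum} that single letters from one summand are never identified with words supported on the complementary summands, which is the structural feature responsible for emptiness (rather than a singleton intersection) of $\sigma_x(T)\cap\left(\boxplus_{y\in Y}\sigma_y\right)(\boks{Y}{}{T})$.
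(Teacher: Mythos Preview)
Your argument is correct and follows the same overall strategy as the paper: assume a common element exists, then contradict the injectivity of $\underset{s\in S}{\boxplus}\sigma_s$ for $S=Y\cup\{x\}$ by exhibiting two distinct preimages with the same image. The only difference is in the choice of witnesses: the paper picks an auxiliary $v'\in\underset{Y}{\boxplus}T$ and compares the three-letter words $\osym{tvv'}$ and $\osym{vtv'}$ (which are of different types in Proposition~\ref{prop.sum} and both map to $\sigma_Y(v')$), whereas you go straight to the single-letter words $\iota_x(s)$ and $\iota_Y(w)$, which already map to the common element $z$ and are distinct because the summands are disjoint in the coproduct; your version is slightly more direct.
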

 \begin{proof}
 Set $V = \underset{y\in Y}{\boxplus}T$ and $\sigma_Y = \underset{y\in Y}{\boxplus}\sigma_{y}$, and suppose that there exist $m\in M$, $t\in T$ and $v\in V$  such that $\sigma_x(t) = \sigma_Y(v)=m$. Take any $v'\in V$. In view of Proposition~\ref{prop.sum} the words $\osym{tvv'}$ and $\osym{vtv'}$ are different, but
 $$
 \begin{aligned}
 (\sigma_x\boxplus \sigma_Y) (\osym{tvv'}) &= [\sigma_x(t),\sigma_Y(v),\sigma_Y(v')] = [m,m,\sigma_Y(v')] = \sigma_Y(v'),\\
 (\sigma_x\boxplus \sigma_Y) (\osym{vtv'}) &= [\sigma_Y(v),\sigma_x(t),\sigma_Y(v')] = [m,m,\sigma_Y(v')] = \sigma_Y(v'),
 \end{aligned}
 $$
 which contradicts the assumption that $\sigma_x\boxplus \sigma_Y$ is a monomorphism.  
  \end{proof}
 
 The statement of Lemma~\ref{lem.intersec} is in perfect categorical accord with what might be expected of a free or a linearly independent set. Just as in the case of modules or vector spaces, the intersection of the module spanned by any finite subset of a free set with a cyclic module generated by an element from within the free set but without this subset is the zero module, i.e.\ the initial object in the category of modules, so is the corresponding intersection in the case of modules over a truss  -- the empty set, i.e.\ the initial object in the category of such modules.

\begin{lemma}
Let $M$ be a left module over a truss $T$. Then $M$ is a free $T$-module if and only if $M$ has a basis. 
\end{lemma}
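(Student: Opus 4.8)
The plan is to prove both implications by analysing the single comparison morphism
$$
\phi_B := \underset{b\in B}{\boxplus}\sigma_b : \Tt^B \lra M,
$$
the unique $T$-module map sending the generator $1b$ of the $b$-th copy of $T$ in $\Tt^B$ to $b\in M$. For the forward direction I would first note that having a basis is preserved under isomorphism, so it suffices to exhibit a basis for $M=\Tt^X$ itself, and the natural candidate is $B=\{1x\mid x\in X\}$. Here each $\sigma_{1x}\colon T\to \Tt^X$ is precisely the canonical inclusion of the summand $Tx$, so $\underset{x\in X}{\boxplus}\sigma_{1x}$ is the canonical identification $\underset{x\in X}{\boxplus}T\cong\underset{x\in X}{\boxplus}Tx=\Tt^X$ and is in particular an epimorphism. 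Freeness of $B$ asks that for every finite $S\subseteq X$ the map $\underset{x\in S}{\boxplus}\sigma_{1x}$ be a monomorphism; this map factors as the canonical isomorphism $\underset{x\in S}{\boxplus}T\cong\Tt^S$ followed by the sub-coproduct inclusion $\Tt^S\hookrightarrow\Tt^X$, so everything reduces to the injectivity of the latter.

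For the converse I would take a basis $B$ and show that $\phi_B$ is an isomorphism. Surjectivity is exactly the epimorphism condition in the definition of a basis (Definition~\ref{def.lin.ind}). For injectivity I would use that, by the explicit normal form of Proposition~\ref{prop.sum} and its extension to families, every element of $\Tt^B$ is a reduced word involving letters from only finitely many summands, and hence lies in the image of the sub-coproduct inclusion $j_S\colon\Tt^S\hookrightarrow\Tt^B$ for some finite $S\subseteq B$. Given $w,w'\in\Tt^B$ with $\phi_B(w)=\phi_B(w')$, I would choose $S$ large enough to contain all letters of both, write $w=j_S(u)$ and $w'=j_S(u')$, and observe that $\phi_B\circ j_S=\underset{x\in S}{\boxplus}\sigma_x$ by the uniqueness clause of the coproduct universal property. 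Freeness of $B$ makes this composite a monomorphism, forcing $u=u'$ and hence $w=w'$. Since a bijective homomorphism of $T$-modules is an isomorphism, $M\cong\Tt^B$ is free.

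Both directions thus hinge on the same combinatorial input, which I expect to be the main obstacle: the canonical map $\Tt^S\to\Tt^B$ from a (finite) sub-coproduct into a coproduct is injective, and dually every element of a coproduct meets only finitely many summands. This is not a formal categorical statement but a property of the concrete model of $A\boxplus B$ constructed in Proposition~\ref{prop.sum}; the key point is that the grafting-and-pruning reduction of a word is unaffected by enlarging the indexing family, so a reduced word in the letters of $S$ remains reduced and distinct from the other such words when viewed inside $\Tt^B$. Concretely I would either read this off directly from the normal form of Proposition~\ref{prop.sum}, or (using $S\neq\emptyset$) construct a retraction $\Tt^B\to\Tt^S$ sending every generator outside $S$ to a fixed generator inside $S$, thereby splitting the inclusion $j_S$ and certifying it as a monomorphism. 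With this fact in hand, the remaining steps are the routine identifications recorded above.
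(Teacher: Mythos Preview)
Your proposal is correct and follows essentially the same approach as the paper: in both directions the argument turns on the comparison morphism $\underset{b\in B}{\boxplus}\sigma_b$, with surjectivity from the basis definition and injectivity deduced from the finite-support property together with the monomorphism condition on finite subsets, while the forward direction transports the canonical basis $\{1x\}$ of $\Tt^X$ through the defining isomorphism (exactly what the paper does via its commutative diagram). Your explicit isolation of the sub-coproduct inclusion $\Tt^S\hookrightarrow\Tt^B$ as the one nontrivial combinatorial input, and the retraction argument you propose for it, are a slight sharpening of what the paper leaves implicit in the phrase ``is a monomorphism on each finitely generated submodule''.
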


\begin{proof}
If $M$ has a basis $B$, then $\underset{x\in B}{\boxplus}\sigma_{x}$ is an epimorphism. Since all elements of $\Tt^B$ have finite length, i.e.\ every element belongs to  $\underset{S}{\boxplus}T$ for a finite subset $S$ of $B$, and for all such subsets $\underset{x\in S}{\boxplus}\sigma_{x}$ is one-to-one, then so is $\underset{x\in B}{\boxplus}\sigma_{x}$. Thus $\underset{x\in B}{\boxplus}\sigma_{x}$ is an isomorphism, and hence $M$ is free.

In the converse direction,  since $M$ is free there exist a set $X$ and a left $T$-module isomorphism $\Theta: \underset{x\in X}{\boxplus}Tx \lra M$. For all $x\in X$, let $e_x = \Theta(x) \in M$, and let $B=\{e_x\;|\; x\in X\}$. Since $\Theta$ is a homomorphism of $T$-modules, for all $t\in T$,
$$
\Theta(tx) = t\Theta(x) =t\cdot e_x = \sigma_{e_x}(t).
$$
Since $X$ is isomorphic to $B$, by the universality of direct sums there is a $T$-module isomorphism $\varphi: \underset{B}{\boxplus} T\lra \Tt^X$ and thus we  obtain the following commutative diagrams, for all $e_x\in B$,
$$
\xymatrix{T \ar[rr]\ar[rrd]_{\sigma_{e_x}} && \underset{B}{\boxplus} T\ar[rr]^\varphi \ar[d]^{\underset{e_x\in B}{\boxplus}\sigma_{e_x}} && \Tt^X \ar[lld]^{\Theta}\\
&& M. &&}
$$
Thus $\underset{e_x\in B}{\boxplus}\sigma_{e_x}$ is an isomorphism, and hence $B$ generates $M$. Since $\underset{e_x\in B}{\boxplus}\sigma_{e_x}$  is a monomorphism on $\underset{B}{\boxplus} T$, it is a monomorphism on each finitely generated submodule of $\underset{B}{\boxplus} T$, in particular $\underset{e_x\in S}{\boxplus}\sigma_{e_x} = \underset{e_x\in B}{\boxplus}\sigma_{e_x}\!\!\mid_{\underset{S}{\boxplus} T}$ is a monomorphism for all finite subsets $S$ of $B$. Hence $B$ is a basis for $M$.
\end{proof}

The remainder of this section is devoted to the comparison of modules over a ring with modules over the truss constructed from this ring. Recall that $\tT(R)$ denotes the truss associate with  a ring $R$.  If $M$ is a left module over a ring $R$ it is automatically a left module over the truss $\tT(R)$, and when viewed as such (with the heap operation coming from the additive group $M$), it will be denoted by $\tT(M)$. This defines a functor
$$
\tT: R\ds\Mod \lra \tT(R)\ds\Mod, \qquad M\lto \tT(M)=M, \qquad \varphi\lto \tT(\varphi) =\varphi.
$$
 Note, that a module over $\tT(R)$ is not necessarily a module over $R$, for example the heap $\hH(\ZZ)$ with the $\ZZ$-action, $m\cdot n = n$, for all $m,n\in \ZZ$ is a left module over $\tT(\ZZ)$, but not over the ring $\ZZ$. The forthcoming Lemma~\ref{lem.abs} clarifies when a module over the truss associated to a ring is a module over this ring.  Before we state this lemma, however, we make an observation about a striking difference between  free modules over a ring and free modules over the associated truss. We note, in particular, that the functor $\tT$ does not preserve freeness.
 
 \begin{example}
Let us consider module $\tT(\mathbb{Z}_{n}\oplus \mathbb{Z}_{n})$ over $\mathrm{T}(\mathbb{Z}_{n})$, for any $n>1$.  Suppose that $\tT(\mathbb{Z}_{n}\oplus \mathbb{Z}_{n})$  is a free module, i.e.\ that it is isomorphic to a direct sum of $k$-copies of $\mathrm{T}(\mathbb{Z}_{n})$. By Proposition~\ref{prop:sum-iso}, if $k>1$ then such a direct sum would be an infinite set, so it cannot be isomorphic to a module built on a finite set. Thus $k=1$, and simple element counting forces $n=n^2$, which contradicts the assumption that $n>1$. Thus  $\tT(\mathbb{Z}_{n}\oplus \mathbb{Z}_{n})$ over $\mathrm{T}(\mathbb{Z}_{n})$ is not free, despite that fact that $\mathbb{Z}_{n}\oplus \mathbb{Z}_{n}$ is a free $\mathbb{Z}_{n}$-module.
\end{example}

 Recall that an element $e$ of a $T$-module $M$ is called an {\em absorber}, if $t\cdot e =e$, for all $t\in T$. The set of all absorbers  of $M$ is denoted by $\abs M$.
 
 \begin{lemma}\label{lem.abs} 
 Let $T$ be a (unital) truss and $R$ a (unital) ring.
 \begin{zlist}
 \item The assignment:
$$
\mathrm{Abs}: T\ds\Mod \lra T\ds\Mod, \qquad M\lto \abs M, \quad \varphi\lto\varphi,
$$
is a functor. 
\item Let $M$ be a left module over $T(R)$. Then:
\begin{rlist}
 \item $\abs M = \{0\cdot m\; |\; m\in M\}$;
 \item $M=\tT(N)$ for some module of $N$ if and only if $\abs M$ is a singleton set.
  \end{rlist}
\item Let $N,N'$ be left $R$-modules. Then $N$ is isomorphic to $N'$ if and only if $\tT(N)$ is isomorphic to $\tT(N')$ as  $\tT(R)$-modules.
\item Let $M$ be a (unital) $\mathrm{T}(R)$-module.  Then
$\mathrm{G}(M/\abs M; \abs M)$ is a (unital)  $R$-module. We denote this $R$-module by $M_\mathrm{Abs}$.
\item The assignment
$$
\begin{aligned}
(-)_\mathrm{Abs} &: \tT(R)\ds\Mod \lra R\ds\Mod,\\
& M\lto M_\mathrm{Abs}, \quad (\varphi: M\to M')\lto (\varphi_\mathrm{Abs}: \overline{m}\mapsto \overline{\varphi(m)}),
\end{aligned}
$$
is a functor such that, for all $R$-modules $N$, $\tT(N)_\mathrm{Abs}\cong N$.
\item The functor $(-)_\mathrm{Abs}$ is the left adjoint to the functor $\tT$.
\item The functor $(-)_\mathrm{Abs}$ preserves monomorphisms.
\end{zlist}
\end{lemma}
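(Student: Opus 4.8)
The plan is to exploit that both $\tT(R)\ds\Mod$ and $R\ds\Mod$ are varieties of algebras, so that in each of them a morphism is a monomorphism precisely when it is injective on underlying sets (injectivity is detected by morphisms out of the free algebra on one generator, which represents the underlying-set functor). Hence it suffices to prove: if $\varphi\colon M\to M'$ is an injective homomorphism of $\tT(R)$-modules, then the underlying map of $\varphi_{\mathrm{Abs}}\colon M_{\mathrm{Abs}}\to M'_{\mathrm{Abs}}$, namely $\overline m\mapsto\overline{\varphi(m)}$ from $M/\abs M$ to $M'/\abs{M'}$, is injective. Since $(-)_{\mathrm{Abs}}$ is a \emph{left} adjoint by part~(6), there is no formal reason for it to preserve monomorphisms, so a hands-on argument is genuinely required.

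Before the main step I would record two elementary facts. First, every module homomorphism carries absorbers to absorbers: if $e\in\abs M$ then $t\cdot\varphi(e)=\varphi(t\cdot e)=\varphi(e)$ for all $t$ by \eqref{absorb}, so $\varphi(\abs M)\subseteq\abs{M'}$. Second, by part~(2)(i) an element $p\in M$ is an absorber if and only if $0\cdot p=p$.

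The core computation then runs as follows. Suppose $\overline{\varphi(m_1)}=\overline{\varphi(m_2)}$ in $M'/\abs{M'}$; by the description \eqref{rel.sub} of the sub-heap congruence this says $[\varphi(m_1),\varphi(m_2),s']\in\abs{M'}$ for every $s'\in\abs{M'}$. Pick any $s\in\abs M$ (say $s=0\cdot m_1$), set $p=[m_1,m_2,s]$, and take $s'=\varphi(s)\in\abs{M'}$. Then $\varphi(p)=[\varphi(m_1),\varphi(m_2),\varphi(s)]\in\abs{M'}$, so $\varphi(p)$ is an absorber and thus $\varphi(0\cdot p)=0\cdot\varphi(p)=\varphi(p)$. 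Injectivity of $\varphi$ forces $0\cdot p=p$, whence $p=[m_1,m_2,s]\in\abs M$ by the second recorded fact; applying \eqref{rel.sub} once more gives $m_1\sim_{\abs M}m_2$, i.e.\ $\overline{m_1}=\overline{m_2}$. This proves $\varphi_{\mathrm{Abs}}$ injective, hence a monomorphism in $R\ds\Mod$.

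I expect the decisive step to be the passage from ``$\varphi(p)$ is an absorber of $M'$'' to ``$p$ is an absorber of $M$'': this is exactly where injectivity of $\varphi$ enters, used together with the characterisation $\abs M=\{0\cdot m\mid m\in M\}$ from part~(2)(i) and the preservation of absorbers just noted. A harmless degenerate case, $M=\emptyset$, is to be dispatched separately, since there $\varphi$ is the unique map out of the initial object and the assertion is vacuous.
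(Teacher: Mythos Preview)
Your argument for part~(7) is correct and follows the same route as the paper: both show that $\varphi$ injective forces $\varphi^{-1}(\abs{M'})\subseteq\abs M$, then deduce injectivity of $\varphi_{\mathrm{Abs}}$ via the sub-heap relation \eqref{rel.sub}. The only cosmetic difference is that the paper checks $tw=w$ for all $t$ directly from the definition of absorber, whereas you invoke (2)(i) and test only $0\cdot p=p$; your explicit handling of the empty case and the remark that monomorphisms coincide with injections in these varieties are welcome clarifications the paper leaves implicit.
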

\begin{proof}
(1) The distributive law over the heap operation in a $T$-module $M$ ensures that $\abs M$ is a sub-heap of $M$. That $\abs M$ is closed under the action follows immediately form the definition of an absorber. Since morphisms of $T$-modules preserve the $T$-action they also map absorbers into absorbers.

(2) (i) Since $r0 =0$ in $\tT(R)$, all elements listed are absorbers. If $e$ is an absorber, then, by the absorption property $0\cdot e = e$.

(2) (ii) If $M= \tT(N)$, then, by distributive laws for modules over rings $0$ is an absorber in $M$ and $0\cdot m = 0 \in M$, for all $m\in M$, which implies that 0 is the unique absorber of $M$. In converse direction, by (i) we know that the unique absorber is $e = 0\cdot m$.  Then one easily checks that $\gG(M;e)$ with the original action of $\tT(R)$ on $M$ is a left $R$-module.

(3) Since $\tT$ is a functor, if $N\cong N'$, then $\tT(N)\cong \tT(N')$. Conversely, since, by statement (2) both $\tT(N)$ and $\tT(N')$ have unique absorbers (they are neutral elements for addition), and a morphism of modules over a truss maps absorbers into absorbers (cf.\ statement (1)), any morphism of $\tT(R)$-modules $\tT(N) \lra \tT(N')$ is automatically a morphism of Abelian groups and hence $R$-modules.

(4) Since $\abs M$ is a submodule of $M$ by assertion (1), $M/\abs M$ is a $\tT(R)$-module with an absorber $\abs M$ (see \cite[Section~4]{Brz:par}). There are no other absorbers in $M/\abs M$, for since $M/\abs M$ is a module of $\tT(R)$, by  statement (2)(i) all its absorbers have the form $0\cdot \bar{m} = \overline{0\cdot m} = \abs M$. Thus, by statement (2)(ii), $\mathrm{G}(M/\abs M; \abs M)$ is a left $R$-module. The unitality condition is provided by the unitality of the $\mathrm{T}(R)$-module $M$.

(5) The function $\varphi_\mathrm{Abs}$ is well defined by statement (1), as $\varphi$ maps absorbers to absorbers. By the same arguments as in the proof of statement (3) $\varphi_\mathrm{Abs}$ is a homomorphism of $R$-modules. Since $\abs{\tT(N)} = \{0\}$. The elements of  $\tT(N)/\{0\}$ are all singleton subsets of $N$, $\tT(N)/\{0\} = \{\{n\} \;|\; n\in N\}$, and the stated isomorphism is simply $\{n\}\lto n$.

(6) Let $N$ be a left $R$-module and $M$ a left $\tT(R)$-module, and consider the maps:
$$
\begin{aligned}
\Theta_{M,N}: \hom R{M_\mathrm{Abs}}N \lra \hom {\tT(R)}{M}{\tT(N)}, &\qquad \varphi\lto [m\mapsto \varphi(\bar{m})],\\
\Theta^{-1}_{M,N}: \hom {\tT(R)}{M}{\tT(N)}  \lra  \hom R{M_\mathrm{Abs}}N, &\qquad \psi\lto [ \bar{m} \mapsto \psi(m)],
\end{aligned}
$$
that are clearly mutual inverses. While $\Theta_{M,N}$ is obviously well-defined, we need to establish whether the definition of $\Theta^{-1}_{M,N}$ does not depend on the choice of the representative. Suppose that $m'$ and $m$ belong to the same class. In view of the description on  $\abs M$ in (2)(i) this means that there exist $v,w\in M$ such that
$[m,m',0v]=0w$. Applying $\psi$ to this equality and using the fact that $\psi$ is a homomorphism of $\tT(R)$-modules we find that $[\psi(m),\psi(m'),0\psi(v)]=0\psi(w)$. Both $0\psi(v)$ and $0\psi(w)$ are absorbers in $\tT(N)$, but, by (2)(ii) there is exactly one absorber in $\tT(N)$, so $0\psi(v)=0\psi(w)$, and we conclude that $\psi(m)= \psi(m')$. Thus the definition of $\Theta^{-1}_{M,N}$ does not depend on the choice of the representative in the class of $m$. Checking the naturality of $\Theta_{M,N}$ is straightforward.

(7) Let $\varphi:M\to M'$ be a monomorphism of $\tT(R)$-modules. Observe that if $\varphi(w) \in \abs{M'}$, then, for all $t\in \tT(R)$, $\varphi(w) = t\varphi(w) = \varphi(tw)$ so that $w=tw$ since $\varphi$ is one-to-one. Hence  $w\in \abs{M}$. 
Assume that 
 $\varphi_{\Abs}(\overline{a})=\varphi_{\Abs}(\overline{b})$, that is $\varphi(a)\sim_{\abs{M'}} \varphi(b)$.
Since, by (1), for all $w\in \abs{M}$, $\varphi(w)\in \abs{M'}$, 
$$
 \varphi\left([a,b,w]\right) = [\varphi(a),\varphi(b),\varphi(w)] \in \abs{M'}.
$$
Thus, for all $w\in \abs M$,  $[a,b,w] \in \abs{M}$, i.e., $\overline{a}=\overline{b}$. 
 Therefore, $\varphi_{\Abs}$ is a monomorphism, as required.
\end{proof}

With the help of Lemma~\ref{lem.abs} we can prove the main result of this section, which explains the interplay between the freeness and the functor $\tT$.

\begin{theorem}\label{thm.1dim}
Let $R$ be a unital ring.
\begin{zlist}
\item For a left $R$-module $N$, $\tT(N)$ is a free $\mathrm{T}(R)$-module if and only if $N\cong R$. 
\item  If $M$ is a free module over $\mathrm{T}(R)$, then $M_\mathrm{Abs}$ is  a free $R$-module.
\end{zlist}
\end{theorem}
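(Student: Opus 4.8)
The plan is to reduce everything to the description of free objects in $\tT(R)\ds\Mod$ as coproducts $\Tt^X=\boxplus_{x\in X}Tx$, combined with the absorber calculus of Lemma~\ref{lem.abs}.

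For part~(1), the implication $N\cong R\Rightarrow\tT(N)$ free is immediate: functoriality of $\tT$ (or Lemma~\ref{lem.abs}(3)) gives $\tT(N)\cong\tT(R)\cong\Tt^{\{x\}}$, the free module on a one-element set. For the converse, suppose $\tT(N)\cong\Tt^X$. By Lemma~\ref{lem.abs}(2)(ii) the module $\tT(N)$ has exactly one absorber, and since a module isomorphism carries absorbers bijectively to absorbers (Lemma~\ref{lem.abs}(1)), it suffices to show that $\Tt^X$ has more than one absorber as soon as $|X|\geq 2$ (and that $X\neq\emptyset$, which holds because $\tT(N)$ is non-empty). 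Fixing distinct $x,y\in X$, I would note that for each $z\in X$ the element $0z\in Tz$ is an absorber of $Tz$, so by the letter-by-letter action every alternating word $0x$, $0y$, $\osym{0x0y0x}$, $\osym{0y0x0y},\ldots$ is fixed by $0\cdot(-)$ and hence, by Lemma~\ref{lem.abs}(2)(i), is an absorber of $\Tt^X$. By the explicit description of the coproduct in Proposition~\ref{prop.sum} these words are pairwise distinct, so $\abs{\Tt^X}$ is infinite; conceptually this is exactly the $\ZZ$-summand that Proposition~\ref{prop:sum-iso} attaches to every coproduct of Abelian heaps. This forces $|X|=1$, whence $\tT(N)\cong Tx\cong\tT(R)$, and Lemma~\ref{lem.abs}(3) yields $N\cong R$.

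For part~(2) I would invoke Lemma~\ref{lem.abs}(6): as a left adjoint, $(-)_\mathrm{Abs}$ preserves coproducts. Writing a free module as $M\cong\Tt^X=\boxplus_{x\in X}Tx$, which is a coproduct in $\tT(R)\ds\Mod$, applying $(-)_\mathrm{Abs}$ gives $M_\mathrm{Abs}\cong\bigoplus_{x\in X}(Tx)_\mathrm{Abs}$, a coproduct (direct sum) in $R\ds\Mod$. Each $Tx$ is isomorphic to $\tT(R)$, which is $\tT$ applied to $R$ regarded as the regular left $R$-module, so Lemma~\ref{lem.abs}(5) gives $(Tx)_\mathrm{Abs}\cong R$ as $R$-modules. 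Hence $M_\mathrm{Abs}\cong\bigoplus_{x\in X}R=R^{(X)}$ is a free $R$-module; when $X$ is finite this is $R^{|X|}$, covering the finitely generated refinement.

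The one genuinely non-routine step is the absorber count in the forward direction of part~(1): showing that $\Tt^X$ acquires infinitely many absorbers once $|X|\geq 2$. The subtlety is that these extra absorbers are produced not by any single cyclic module $Tx$ but by the grafting-and-pruning structure of the coproduct itself -- the same $\ZZ$ that is intrinsic to every coproduct of Abelian heaps by Proposition~\ref{prop:sum-iso} -- so the heap-versus-group asymmetry is doing the real work. Once this count is secured, both parts are assembled purely from the functorial bookkeeping already recorded in Lemma~\ref{lem.abs}.
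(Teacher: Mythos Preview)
Your proof is correct and follows essentially the same route as the paper: the absorber count via Lemma~\ref{lem.abs}(2)(ii) for part~(1), and preservation of coproducts by the left adjoint $(-)_\mathrm{Abs}$ for part~(2). The only difference is cosmetic: where you exhibit infinitely many absorbers in $\Tt^X$ for $|X|\geq 2$ (and tie this to the $\ZZ$-summand of Proposition~\ref{prop:sum-iso}), the paper simply notes that $0x\neq 0y$ already gives two, which is all the contradiction requires.
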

\begin{proof}
(1) If $N\cong R$, then $\tT(N)\cong \tT (R)$ by Lemma~\ref{lem.abs}~(3) (or simply by the fact that $\tT$ is a functor). 
In the opposite direction, assume that there exists a set $X$ such that $\mathrm{T}(N)\cong \underset{x\in X}{\boxplus}{\tT(R)x}$. By Lemma~\ref{lem.abs}~(2)(ii), $\underset{x\in X}{\boxplus}{\tT(R)x}$ must have exactly one absorber. This is the case when $X$ is a singleton set, which yields the isomorphism $N\cong R$ by Lemma~\ref{lem.abs}~(3). If $X$ has more than one element, then there exist $x,y\in X$ such that $x\not=y$ and thus $0x\not=0y \in \underset{x\in X}{\boxplus}{\tT(R)x}$ are different absorbers, which contradicts statement (2)(ii) in Lemma~\ref{lem.abs}.

(2) Assume that $M\cong \underset{x\in X}{\boxplus}{\tT(R)x}$, for some set $X$. By  Lemma~\ref{lem.abs}~(6), $(-)_{\mathrm{Abs}}$ has a right adjoint and thus it preserves coproducts, so that
$$
M_{\mathrm{Abs}} \cong \left(\underset{x\in X}{\boxplus}{\tT(R)x}\right)_{\mathrm{Abs}}\cong \underset{x\in X}{\bigoplus}{(\tT(R)x)_{\mathrm{Abs}}}\cong \underset{x\in X}{\bigoplus}Rx,
$$
where the last isomorphism follows by Lemma~\ref{lem.abs}~(5). Therefore, $M_{\mathrm{Abs}}$ is a free $R$-module as stated.

\end{proof}

Although Theorem~\ref{thm.1dim}  states that the functor $(-)_{\Abs}$ preserves freeness the proof neither gives an insight into the process of obtaining the free $R$-module nor does it explain fully 
the idea behind the definition of a basis  in Definition~\ref{def.lin.ind}.
 Let us discuss this matter further in the finitely generated case. 
 Let $X=\{x_1,\ldots, x_n\}$ be such that $M\cong \underset{x\in X}{\boxplus}{\tT(R)x}$. First we describe the submodule $\abs M$, freely identifying $M$ with the direct sum of $n$ copies of $T$. By Lemma~\ref{lem.abs}(2)(i), $\abs M = \{0\cdot m\; |\; m\in M\}$. Since $\tT(R)x_i = \{rx_i\;|\; r\in R\}$,  every element of $M$ is of the form 
$m= \osym{(r_1x_{i_1}) (r_2 x_{i_2})\ldots (r_{2k+1} x_{i_{2k+1}})}$ and hence $0\cdot m = \osym{(0x_{i_1}) (0x_{i_2})\ldots ( 0x_{i_{2k+1}})}$. Therefore $\abs M$ is the submodule of $M,$ or, more precisely $\abs {\underset{x\in X}{\boxplus}{\tT(R)x}}$, is a submodule of $\underset{x\in X}{\boxplus}{\tT(R)x}$ generated by $\{0x_1,\ldots, 0x_n\}$ as a heap. Choosing the $0x_i$ as special elements in $\tT(R)x_i$ as in (the multi-heap versions of) Proposition~\ref{prop.sum}, $\abs {\underset{x\in X}{\boxplus}{\tT(R)x}}$ is simply the sub-heap of tails, i.e.\
$$
\abs M\cong \abs {\underset{x\in X}{\boxplus}{\tT(R)x}}\cong \Hh(\{0x_{1}\})\boxplus\Hh(\{0x_{2}\})\boxplus \ldots \boxplus \Hh(\{0x_{n}\}) \cong \hH(\mathbb{Z}^{n-1});
$$
see Corollary~\ref{cor:sum-free}. By (the multi-heap extension of Proposition~\ref{prop:sum-iso})
$$
M\cong \underset{x\in X}{\boxplus}{\tT(R)x} \cong \hH\left( \bigoplus_{i=1}^n \gG(\tT(R)x_i;0x_i)\oplus \ZZ^{n-1} \right)
 \cong \hH\left(R^n\oplus \ZZ^{n-1}\right).
$$
Since the $\ZZ^{n-1}$-part arises from tails made of the absorbers $0x_i$, the action of $T(R)$ on this part is trivial, i.e.\ the $\tT(R)$ action on $M$ transfers to 
$$
r\cdot (r_1,r_2,\ldots , r_n, k_1,\ldots, k_{n-1}) = (rr_1,rr_2,\ldots , rr_n, k_1,\ldots, k_{n-1}), \qquad r,r_i\in R, k_i\in \ZZ.
$$
Putting all this together yields an isomorphism of  $R$-modules, 
$$
M_{\mathrm{Abs}} = \gG \left(M/\abs M; \abs M\right)\cong \left(R^n\oplus \ZZ^{n-1}\right)/\mathbb{Z}^{n-1} \cong R^n,
$$
where the first isomorphism follows by Lemma~\ref{lem.frac2}, so that $M_{\mathrm{Abs}} $ is a free module. 

Now, assume that $M$ is a free rank $n$ module over $\tT(R)$ with a basis $B.$  To prove that $B$ is a basis for $M_{\Abs}$ observe that, for all $S\subset B,$ $(\boks{s\in S}{}{\sigma_s})_{\Abs}$ is a monomorphism of $R$-modules(see Lemma \ref{lem.abs}~(7)), and by the discussion following Lemma~\ref{lem.intersec}, appropriate intersections of $(\boks{s\in S}{}{\sigma_s})_{\Abs}(\boks{S}{}{\tT(R)})_{\Abs}$ are no longer empty; they are now the initial object of 
$R\ds\Mod$, i.e.\ $\{0\}.$ Firstly, since $B$ spans $M,$ then it also spans $M_{\Abs}.$ Therefore, it is enough to show that the set $B$ is linearly independent in $M_{\Abs}.$ Suppose to the contrary that $B$ is linearly dependent,  so that there exist $r_{i}\in R$ such that 
$$
r_1b_1+\ldots+r_{n}b_{n}=0,
$$
for $b_i\in B,$ and $r_{n}\not=0.$ This implies that $r_1b_1+\ldots+r_{n-1}b_{n-1}=-r_{n}b_{n}.$ Furthermore
$$
-r_{n}b_{n}\in (\boks{i=1}{n-1}{\sigma_{b_i}})_{\Abs}(\boks{i=1}{n-1}{\tT(R))_{Abs}}\cap(\sigma_{b_{n}})_{\Abs}((\tT(R))_{\Abs})=\{0\}.
$$
Therefore, $-r_nb_n=0$ and $(\sigma_{b_{n}})_{\Abs}(0)=(\sigma_{b_{n}})_{\Abs}(-r_n),$ and since $(\sigma_{b_{n}})_{\Abs}$ is a monomorphism, $r_n=0.$ Now by recursion for all $i=1,\ldots,n,$ $r_i=0,$ and we arrive at a contradiction with the assumption that $B$ is a linearly dependent set. Therefore, $B$ is a basis for $M_{\Abs}.$ To sum up,  at least in the case of the truss associated to a ring, Definition~\ref{def.lin.ind} of a free set is justified by the linear independence of its elements in the associated module over a ring.

\end{document}